\newcommand*{\@old@slash}{}\let\@old@slash\slash
\def\slash{\relax\ifmmode\delimiter"502F30E\mathopen{}\else\@old@slash\fi}
\def\CC{\mathcal{C}}
\newcommand{\cat}[1]{\ensuremath{\mathsf{#1}}} 
 \newcommand{\newdownarrow}{{{\rlap{$\ $}\hbox{$\downarrow$}}}}
 \newcommand{\newuparrow}{{{\rlap{$\ $}\hbox{$\uparrow$}}}}
 \newcommand{\tbigvee}{\mathop{\textstyle \bigvee }}
 \newcommand{\tbigwedge}{\mathop{\textstyle \bigwedge }}
 \newcommand{\tbigsqcup}{\mathop{\textstyle \bigsqcup }}
\newcommand{\donotbreakdash}[1]{#1\nobreakdash-\hspace{0pt}}
\newtheorem{theorem}{Theorem}[section]
\newtheorem{proposition}[theorem]{Proposition}
\newtheorem{lemma}[theorem]{Lemma}
\newtheorem{corollary}[theorem]{Corollary}
\newtheorem{fact}[theorem]{Fact}
\theoremstyle{definition}
\newtheorem{example}[theorem]{Example}
\theoremstyle{remark}
\newtheorem{remark}[theorem]{Remark}
\newtheorem{remarks}[theorem]{Remarks}
\title[On infinite variants of De Morgan law in locale theory]{On infinite variants of De Morgan law\\
in locale theory}
\author[Igor Arrieta]{Igor Arrieta}
\address{\footnotesize{CMUC, Department of Mathematics, University of
Coimbra,\\ 3001-501 Coimbra, Portugal}}
\address{\footnotesize{Department of Mathematics, University of the Basque Country, UPV/EHU,\\ 48080 Bilbao, Spain}}
 \email{iarrieta024@ikasle.ehu.eus}
\subjclass[2010]{18F70 (primary), and 06D22, 54G05, 06D30 (secondary)} 
\begin{document}

\begin{abstract}
A locale, being a complete Heyting algebra, satisfies De Morgan law 
$(a\vee b)^*=a^*\wedge b^*$ for pseudocomplements. The dual De Morgan 
law $(a\wedge b)^*={a^* \vee b^*}$ (here referred to as the second De 
Morgan law) is equivalent to, among other conditions, $(a\vee b)^{**} =a^{**}\vee b^{**}$, and characterizes the class of extremally disconnected 
locales. This paper presents a study of the subclasses of extremally 
disconnected locales determined by the infinite versions of the second 
De Morgan law and its equivalents.\vspace*{-2.5em}
\end{abstract}
\maketitle
\hypersetup{pageanchor=true}
 \section{Introduction}
Recall that a topological space $X$ is \emph{extremally disconnected} if the closure of every open set is open. The point-free counterpart of this notion is that of an \emph{extremally disconnected locale}, that is, of a locale $L$ satisfying
\begin{gather}\label{edframebat}\tag{ED} (a\wedge b)^{*} =a^{*}\vee b^{*}, \qquad \text{for every } a,b\in L, \end{gather}
or, equivalently, 
\begin{equation}\label{edframebi}\tag{ED$^\prime$} (a\vee b)^{**} =a^{**}\vee b^{**}, \qquad \text{for every } a,b\in L. \end{equation}

Extremal disconnectedness is a well-established topic both in classical and point-free topology (see, for example, Section~3.5 and the following ones in Johnstone's monograph \cite{STONE}), and it admits various characterizations (cf. Proposition~\ref{EDCAR} below, or Proposition~2.3 in \cite{PARAL}). 
The goal of the present paper is to study infinite versions of conditions  \eqref{edframebat} and \eqref{edframebi} (cf. Proposition~\ref{EDCAR}\,\ref{EDCAR1} and \ref{EDCAR3}). We show that, when considered in the infinite case, the conditions are no longer equivalent, and they define two different properties strictly between Booleaness (denoted by (CBA) for short) and extremal disconnectedness.

The stronger one corresponds to the infinite second De Morgan law (IDM), and it can be expressed as the conjunction of the weaker one (which we call infinite extremal disconnectedness, (IED) for the sake of brevity) and a weak scatteredness condition. We are thus led to study the chain of strict implications
   \[\text{(CBA)}\implies\text{(\ref{IDM})}\implies \text{(\ref{IED})}\implies\text{(\ref{edframebat})}.\]
The paper is organized as follows.  In Section~\ref{sec2}, we introduce preliminaries explaining basic concepts and introducing notation. We also recall several aspects about De Morgan law in locale theory.  Section~\ref{sec3} is devoted to define and study infinite variants of extremal disconnectedness. In Section~\ref{sec4} the auxiliary scatteredness property is discussed, since it has not been fully investigated in locale theory and looks interesting in its own. In Sections~\ref{sec5} and \ref{sec6} additional characterizations of (hereditary) infinite extremal disconnectedness are provided. Section \ref{sec7} concerns some categorical aspects mainly regarding the (non-) functoriality of Booleanization. By using infinite extremal disconnectedness an alternative framework for making Booleanization functorial is discussed.  Furthermore, a construction parallel to DeMorganization  (\cite{DM}, cf. \cite{FIELDS}) is provided.

\section{Preliminaries}\label{sec2}

We first recall some background on point-free topology. For more information on the categories of frames and locales, we refer the reader to Johnstone \cite{STONE} or the more recent Picado-Pultr \cite{PP12}. A  \emph{locale} (or \emph{frame}) is a complete lattice $L$ satisfying $$a\wedge \tbigvee B = \tbigvee \{ a\wedge b\mid b\in B\}$$
for all $a\in L$ and $B\subseteq L$. A \emph{frame homomorphism} is a function preserving arbitrary joins (including the bottom element $0$) and finite meets (including the top element $1$). Frames and their homomorphisms form a category \cat{Frm}. For each $a$ the map $a\wedge(-)$ preserves arbitrary joins, thus it has a right (Galois) adjoint $a \to (-)$,  making $L$ a complete Heyting algebra (i.e. a cartesian closed category, if one regards $L$ as a  thin category). This right adjoint is called the \emph{Heyting operator}. 
A comprehensive list  
 of its properties may be found in \cite[III 3.1.1]{PP12}, and we will freely use some of them, e.g.:
\begin{enumerate}[label={\rm(\arabic*)}]
\item \label{hp1}  $a\to \tbigwedge b_i = \tbigwedge (a \to b_i)$\textup;
\item \label{hp2} $\left( \tbigvee a_i \right) \to b= \tbigwedge (a_i\to b)$\textup; \hfill\textup($\tbigvee \tbigwedge$-distr\textup)
\item \label{hp3} $a\to b=1$ if and only if $a\leq b$\textup;
\item \label{hp4} $a\to b=a\to (a\wedge b)$\textup;
\item \label{hp5} $a\to b=(a\vee b)\to b$\textup;
\end{enumerate}
 for all $\{a_i\}_{i\in I},\,\{b_i\}_{i\in I}\subseteq L$ and all $a,b\in L$. 
  
The \emph{pseudocomplement} of an $a\in L$ is $a^*=a\to0$. We shall also use standard facts such as $a\leq a^{**}$, $a^{***}=a^*$, or the fact that $a\leq b$ implies $b^*\leq a^*$. As a particular case of \textup($\tbigvee \tbigwedge$-distr\textup) above, one has $\left( \tbigvee a_i\right)^*=\tbigwedge a_{i}^{*}$  for any family $\{a_i\}_{i\in I}\subseteq L$. This is known as the \emph{first De Morgan law} (see, for example, \cite[A.I.~Proposition~7.3.3]{PP12}).

Given a topological space $X$, its lattice of open sets $\Omega(X)$ is a frame, and this construction can be upgraded to a functor $\Omega\colon \cat{Top}\longrightarrow \cat{Frm}^{op}$ which sends a continuous map $f\colon X\to Y$ to the preimage operator \linebreak$f^{-1}[-]\colon \Omega(Y)\to\Omega(X)$. There is a further \emph{spectrum} functor $\Sigma\colon \cat{Frm}^{op}\longrightarrow \cat{Top}$ which yields an adjunction 
\[
\begin{tikzcd}[row sep=huge, column sep=huge, text height=1.5ex, text depth=0.25ex]
 \cat{Top} \arrow[bend left=20]{r}[name=a]{\Omega} &\cat{Frm}^{op}  \arrow[bend left=20]{l}[name=b]{\Sigma} \arrow[phantom, from=a, to=b]{}{\ \bot}
\end{tikzcd}
\]

The category $\cat{Loc}$ of locales is by definition the opposite category of $\cat{Frm}$: $$\cat{Loc}=\cat{Frm}^{op},$$
and $\Omega$ restricts to a full embedding of a substantial part of $\cat{Top}$ (namely the full subcategory of sober spaces) into $\cat{Loc}$. The latter can therefore be seen as a category of generalized spaces. We shall mostly speak of objects in \cat{Loc} as locales (instead of frames) when emphasizing the covariant approach. Morphisms in \cat{Loc} can be concretely represented by the right (Galois) adjoints $f_*\colon M\longrightarrow L$ of the corresponding frame homomorphisms $f\colon L\longrightarrow M$; these will be referred to as \emph{localic maps}.

A regular subobject in $\cat{Loc}$ (that is, an isomorphism class of regular monomorphisms) of a locale $L$ is \emph{a sublocale} of $L$. Sublocales of a locale $L$ can be represented as the actual subsets $S\subseteq L$ such that \begin{enumerate}[label={\rm(\arabic*)}]
\item \label{subp1} $S$ is closed under arbitrary meets in $L$, and 
\item \label{subp2} $a\to s\in S$ for all $a\in L$ and $s\in S$.  \end{enumerate}
A different, but equivalent, representation of sublocales is by means of nuclei, i.e. inflationary and idempotent maps $\nu\colon L\longrightarrow L$ which preserve binary meets. The sublocale associated to a nucleus $\nu$ is the image $\nu[L]$, and conversely the nucleus associated to a sublocale $S\subseteq L$ is given by $\iota_S \circ \nu_S$  where $\iota_S$ denotes the inclusion of $S$ into $L$ and $\nu_S$ is its left adjoint frame homomorphism given by 
$$\nu_S(a)=\tbigwedge \{ s\in S \mid s\geq a \}.$$ The following identity is satisfied
$$a\to s=  \nu_S(a)\to s\quad \textrm{for all }a\in L,\,s\in S.$$
A sublocale should not be confused with a \emph{subframe}; the latter is a subobject of a locale in the category $\cat{Frm}$. Subframes can be represented as subsets which are closed under arbitrary joins and finite meets.

For each $a\in L$, one has an \emph{open sublocale} and a \emph{closed sublocale}
$$\mathfrak{o}(a)=\{b\mid b=a\to b\}=\{a\to b\mid b\in L\} \quad\textrm{and}\quad \mathfrak{c}(a)=\newuparrow a$$
which in the spatial case $L=\Omega(X)$ correspond to the open and closed subspaces. 

If $S$ is a sublocale of $L$, the \emph{closure} of $S$ in $L$, denoted by $\overline{S}$, is the smallest closed sublocale containing $S$, which can be computed as $\overline{S}=\mathfrak{c}(\tbigwedge S)$. 
A sublocale $S$ is \emph{dense} if $\overline{S}=L$, or equivalently if $0\in S$. The family $$\mathcal{S}(L)$$ of all sublocales of $L$ partially ordered by inclusion is always a coframe (the order-theoretic dual of a frame) and every localic map $f\colon L\longrightarrow M$  gives by pulling back in $\cat{Loc}$ an \emph{inverse image} map $f^{-1}[-]\colon \mathcal{S}(M)\longrightarrow \mathcal{S}(L)$ which turns out to be a coframe homomorphism (i.e. a function which preserves arbitrary meets and finite joins). Set-theoretic direct image yields a \emph{direct image} map
$f[-] \colon \mathcal{S}(L)\longrightarrow \mathcal{S}(M)$ which is additionally a colocalic map (i.e. a left adjoint of a coframe homomorphism). In this context, the usual adjunction $f[-] \dashv f^{-1}[-]$ is satisfied.
\medskip

Given a locale $L$, we denote by  $B_L$ the subset consisting of \emph{regular elements} of $L$; that is, those $a\in L$ with $a^{**}=a$, or equivalently those $a\in L$ with $a=b^*$ for some $b\in L$. In other words, one has  $$B_L=\{ a^*\mid a\in L\} =\{a\in L\mid a^{**}=a\};$$
 and this subset is called the \emph{Booleanization} of $L$. It was originally introduced by Glivenko \cite{GLIVENKO} as a generalization of the Boolean algebra of regular open subspaces of a topological space, and it can be characterized in several ways, e.g. it is the least dense sublocale, or equivalently, the unique Boolean dense sublocale.

 The nucleus associated to $B_L$ is the double-negation map $(-)^{**}\colon L\longrightarrow L$. We recall the following facts:

\begin{proposition}\label{basic}
Let $L$ be a locale. Then the following hold\textup:
 \begin{enumerate}[label={\rm(\arabic*)}]
 \item \label{basic1} $\tbigwedge a_{i}^{**}\le (\tbigwedge a_{i})^{**}$ for all $\{a_i\}_{i=1}^n\subseteq L$.
 \item \label{basic2} $(a\to b)^{**}= a^{**}\to b^{**}$ for all $a,b\in L$.
 \end{enumerate}
\end{proposition}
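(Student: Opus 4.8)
\textbf{Part \ref{basic1}.} The plan is to reduce everything to the meet-preservation of the double-negation nucleus. For an arbitrary family the inequality $(\tbigwedge a_i)^{**}\le \tbigwedge a_i^{**}$ is automatic, since $\tbigwedge a_i\le a_j$ for every $j$ and $(-)^{**}$ is monotone, so $(\tbigwedge a_i)^{**}\le a_j^{**}$ for each $j$. Hence the real content of \ref{basic1} is the reverse inequality, for finitely many indices. Now $(-)^{**}$ is the nucleus associated with the Booleanization $B_L$, and by the very definition of a nucleus it preserves binary meets; a trivial induction then shows it preserves all \emph{finite} meets, giving in fact the equality $(\tbigwedge_{i=1}^n a_i)^{**}=\tbigwedge_{i=1}^n a_i^{**}$, of which \ref{basic1} is a special case. (One could instead argue directly inside the Heyting algebra, but invoking the nucleus is the most economical route.)

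\textbf{Part \ref{basic2}.} My strategy is to show that the two sides collapse to one and the same pseudocomplement. The crux is the single identity
\begin{equation}\tag{$\ast$}
(a\to b)^*=a^{**}\wedge b^*.
\end{equation}
Granting $(\ast)$, the currying (exponential) law $x\to(y\to z)=(x\wedge y)\to z$, available since $L$ is a Heyting algebra, yields on the one hand
\[
a^{**}\to b^{**}=a^{**}\to(b^*\to 0)=(a^{**}\wedge b^*)\to 0=(a^{**}\wedge b^*)^*,
\]
while on the other hand, applying $(-)^*$ to $(\ast)$,
\[
(a\to b)^{**}=\big((a\to b)^*\big)^*=(a^{**}\wedge b^*)^*.
\]
The two right-hand sides coincide, which is exactly \ref{basic2}.

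\textbf{Establishing $(\ast)$.} This is where the actual work lies: I must check that $a^{**}\wedge b^*$ has the universal property of the pseudocomplement of $a\to b$. For disjointness, the evaluation inequality $a\wedge(a\to b)\le b$ together with $b\wedge b^*=0$ gives $(a\to b)\wedge b^*\wedge a=0$, that is $(a\to b)\wedge b^*\le a^*$; meeting with $a^{**}$ then yields $(a\to b)\wedge(a^{**}\wedge b^*)\le a^*\wedge a^{**}=0$. For maximality, suppose $x\wedge(a\to b)=0$. Since $b\le a\to b$ and $a^*=a\to 0\le a\to b$ (monotonicity of $\to$ in the second variable), we obtain $x\wedge b=0$ and $x\wedge a^*=0$, whence $x\le b^*$ and $x\le a^{**}$, so $x\le a^{**}\wedge b^*$. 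Applying this with $x=(a\to b)^*$ supplies the inequality $(a\to b)^*\le a^{**}\wedge b^*$, and $(\ast)$ follows.

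\textbf{Where the difficulty sits.} Everything outside $(\ast)$ is formal: currying, monotonicity, and the nucleus property. The one genuinely nontrivial point is the appearance of $a^{**}$ rather than $a$, and I expect this to be the main thing to get right; it enters precisely through the maximality argument, where $x\wedge a^*=0$ forces $x\le a^{**}$. It is also worth flagging that \ref{basic1} is recorded only for finite families because a nucleus is guaranteed to preserve finite, but not infinite, meets — the very asymmetry that the rest of the paper sets out to exploit.
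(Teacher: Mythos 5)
Your proof is correct, but there is little in the paper to compare it against: Proposition~\ref{basic} is stated as recalled background (``We recall the following facts''), with no proof supplied --- these are the classical Glivenko-type facts for which the paper implicitly defers to the literature (e.g.\ \cite[III~3.1.1]{PP12}). Your argument therefore fills in what the paper omits. For part~\ref{basic2}, collapsing both sides to $(a^{**}\wedge b^{*})^{*}$ via the identity $(a\to b)^{*}=a^{**}\wedge b^{*}$ is a clean and correct route, and your verification of that identity (disjointness plus maximality, using $a\wedge(a\to b)\le b$, $b\le a\to b$ and $a^{*}\le a\to b$) is sound.

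One caveat on part~\ref{basic1}: invoking that $(-)^{**}$ is a nucleus is not so much a proof of \ref{basic1} as a restatement of it. The binary case of \ref{basic1}, together with the trivial reverse inequality, is exactly what ``$(-)^{**}$ preserves binary meets'' means; note that the paper draws the nucleus-preserves-finite-meets conclusion \emph{from} \ref{basic1} in the sentence following the proposition, not the other way around. Within the paper's narrative your citation is harmless, since the nucleus fact is asserted just before the proposition; but a genuinely self-contained proof should check $a^{**}\wedge b^{**}\le(a\wedge b)^{**}$ directly. The same swap trick you use for disjointness in $(\ast)$ does this in two lines: from $(a\wedge b)^{*}\wedge a\wedge b=0$ one gets $(a\wedge b)^{*}\wedge a\le b^{*}$, hence $(a\wedge b)^{*}\wedge a\wedge b^{**}=0$, hence $(a\wedge b)^{*}\wedge b^{**}\le a^{*}$, hence $(a\wedge b)^{*}\wedge b^{**}\wedge a^{**}=0$, which is the claim.
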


Since the reverse inequality in \ref{basic1} above is trivially satisfied, this tells us that the nucleus  $(-)^{**}\colon L\longrightarrow L$ \emph{always} preserves finite meets and the Heyting operation. Equivalently, the frame homomorphism $(-)^{**}\colon L\to B_L$ is a Heyting algebra homomorphism.

Note that the infinite version of \ref{basic1} in Proposition~\ref{basic} is not always true.
For reasons to be explained below, we shall say that a locale $L$ is \emph{\donotbreakdash{$\bot$}scattered} if the nucleus $(-)^{**}\colon L\longrightarrow L$ preserves arbitrary meets, i.e.
  \begin{equation}\label{botscattered} \tbigwedge a_{i}^{**}\le(\tbigwedge a_{i})^{**},\qquad \{a_i\}_{i\in I}\subseteq L,\tag{$\bot$-scattered} \end{equation}

 \begin{remarks}
 (1) This terminology was introduced in \cite{SCAT} in the broader topos-theoretic setting. It is a property weaker than scatteredness, as introduced by Plewe in \cite{HORD}, where a locale is said to be \emph{scattered} if for each sublocale $S$ of $L$, the Booleanization $B_S$ is an open sublocale in $S$ (cf. Proposition~\ref{morescattered} below). More precisely, scatteredness is just the hereditary variant of \donotbreakdash{$\bot$}scatteredness.\\[2mm]
 (2) This notion has been recently considered by T. Dube and M.R. Sarpoushi under the name of \emph{near Booleaness} (cf. \cite[Theorem 4.9]{DS}).
\end{remarks}

We have the following easy characterization:

\begin{proposition}\label{wedgep} The following conditions are equivalent for a locale $L$\textup:
\begin{enumerate}[label={\rm(\roman*)}]
 \item \label{wedgep1} $L$ is \donotbreakdash{$\bot$}scattered\textup;
 \item \label{wedgep2} The nucleus $(-)^{**}\colon L\to L$ preserves arbitrary meets\textup;
 \item \label{wedgep3} The frame homomorphism $(-)^{**}\colon L\to B_L$ preserves arbitrary meets\textup;
 \item \label{wedgep4} $(-)^{**}\colon L\to B_L$ is a complete Heyting algebra homomorphism;
\item \label{wedgep5} $(\tbigwedge a_{i})^{*}\leq (\tbigvee a_i^{*})^{**}$ for all $\{a_i\}_{i\in I}\subseteq L$\textup;
 \item \label{wedgep6}  If $\{a_i\}_{i=1}^n\subseteq L$ satisfies $\tbigwedge a_{i}=0$ then $\tbigwedge a_{i}^{**}=0$.
\end{enumerate}
\end{proposition}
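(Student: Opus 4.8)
The plan is to dispatch first the three bookkeeping equivalences \ref{wedgep1}$\Leftrightarrow$\ref{wedgep2}$\Leftrightarrow$\ref{wedgep3}$\Leftrightarrow$\ref{wedgep4}, and then to treat the two arithmetic equivalences \ref{wedgep1}$\Leftrightarrow$\ref{wedgep5} and \ref{wedgep1}$\Leftrightarrow$\ref{wedgep6} by pseudocomplement manipulations. For \ref{wedgep1}$\Leftrightarrow$\ref{wedgep2}, I would note that the inequality $(\bigwedge a_i)^{**}\le\bigwedge a_i^{**}$ holds for free by monotonicity of $(-)^{**}$, so ``preserving arbitrary meets'' is equivalent to the reverse inequality, which is exactly \eqref{botscattered}. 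For \ref{wedgep2}$\Leftrightarrow$\ref{wedgep3}, the only point is that $B_L$, being a sublocale, is closed under the meets of $L$; hence the meet in $B_L$ of a family of regular elements agrees with its meet in $L$, and the corestriction $(-)^{**}\colon L\to B_L$ preserves arbitrary meets precisely when $(-)^{**}\colon L\to L$ does. For \ref{wedgep3}$\Leftrightarrow$\ref{wedgep4}, recall that $(-)^{**}\colon L\to B_L$ is already a frame homomorphism preserving the Heyting operation (Proposition~\ref{basic}\,\ref{basic2}); thus it is a complete Heyting algebra homomorphism if and only if it additionally preserves arbitrary meets, i.e.\ if and only if \ref{wedgep3} holds.

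Next I would establish \ref{wedgep1}$\Leftrightarrow$\ref{wedgep5} as a pure duality. Writing $c=\bigwedge a_i$ and using the first De Morgan law in the form $\bigwedge a_i^{**}=(\bigvee a_i^*)^*$, condition \eqref{botscattered} reads $(\bigvee a_i^*)^*\le c^{**}$. Applying the order-reversing operation $(-)^*$ and using $x^{***}=x^*$ transforms this into $c^*\le(\bigvee a_i^*)^{**}$, which is exactly \ref{wedgep5}; applying $(-)^*$ to \ref{wedgep5} and again invoking the first De Morgan law together with $x^{***}=x^*$ recovers \eqref{botscattered}. So \ref{wedgep1} and \ref{wedgep5} are simply pseudocomplements of one another, and no further work is needed.

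Finally, for \ref{wedgep1}$\Leftrightarrow$\ref{wedgep6} the implication \ref{wedgep1}$\Rightarrow$\ref{wedgep6} is immediate, since $\bigwedge a_i=0$ forces $\bigwedge a_i^{**}\le 0^{**}=0$. The converse is the one genuinely requiring an idea: given a family $\{a_i\}_{i\in I}$ with $c=\bigwedge a_i$, I would apply \ref{wedgep6} not to $\{a_i\}$ but to the family $\{a_i\wedge c^*\}_i$, whose meet is $c\wedge c^*=0$. Since $(-)^{**}$ preserves finite meets (Proposition~\ref{basic}\,\ref{basic1}) and $c^{***}=c^*$, one has $(a_i\wedge c^*)^{**}=a_i^{**}\wedge c^*$, so \ref{wedgep6} yields $\bigwedge_i(a_i^{**}\wedge c^*)=(\bigwedge_i a_i^{**})\wedge c^*=0$, which is equivalent to $\bigwedge_i a_i^{**}\le c^{**}=(\bigwedge a_i)^{**}$, i.e.\ to \eqref{botscattered}. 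This substitution $a_i\rightsquigarrow a_i\wedge c^*$, designed to collapse the meet to $0$ while leaving the double pseudocomplements under control, is the main obstacle of the whole proposition; once it is found, the remaining verifications are routine uses of the first De Morgan law and of the finite-meet preservation recorded in Proposition~\ref{basic}.
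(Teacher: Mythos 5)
Your proof is correct. There is nothing in the paper to compare it against: Proposition~\ref{wedgep} is stated there as an ``easy characterization'' with no proof given, and your write-up supplies the omitted verifications along the natural lines --- the bookkeeping equivalences \ref{wedgep1}--\ref{wedgep4} (using that meets and the Heyting operation of $B_L$ are inherited from $L$, together with Proposition~\ref{basic}), the duality between \ref{wedgep1} and \ref{wedgep5} obtained by applying $(-)^*$ and the first De Morgan law with $x^{***}=x^*$, and the substitution $a_i\rightsquigarrow a_i\wedge c^*$ for \ref{wedgep6}$\Rightarrow$\ref{wedgep1}, which is indeed the only step requiring an idea and is carried out correctly.

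One caveat concerns the statement rather than your argument: as printed, condition \ref{wedgep6} quantifies over finite families $\{a_i\}_{i=1}^n$, and under that literal reading it holds in \emph{every} locale by Proposition~\ref{basic}\,\ref{basic1} (if $\tbigwedge a_{i}=0$ then $\tbigwedge a_{i}^{**}\le(\tbigwedge a_{i})^{**}=0$), so it could not be equivalent to \donotbreakdash{$\bot$}scatteredness. Your proof tacitly reads \ref{wedgep6} with an arbitrary index set $\{a_i\}_{i\in I}$ --- compare \ref{wedgep5} and Proposition~\ref{eqq1}\,\ref{eqq12} --- which is clearly what is intended; this is a typo in the paper, not a gap in your proof.
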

\pagebreak
 We turn now our attention to extremal disconnectedness in frames. We also have the following (see \cite{PJ,PP12}):

\begin{proposition}\label{EDCAR}
The following conditions are equivalent for a locale $L$\textup:
\begin{enumerate}[label={\rm(\roman*)}]
\item \label{EDCAR1} $(\tbigwedge a_{i})^{*}\le\tbigvee a_i^{*}$ for all $\{a_i\}_{i=1}^n\subseteq L$\textup;\hfill \textup(Second De Morgan law\textup)
 \item \label{EDCAR2} If $\{a_i\}_{i=1}^n\subseteq L$ satisfies $\tbigwedge a_{i}=0$ then $\tbigvee a_{i}^{*}=1$;
 \item \label{EDCAR3} $(\tbigvee a_{i})^{**}\le\tbigvee a_{i}^{**}$ for all $\{a_i\}_{i=1}^n\subseteq L$\textup;
\item \label{EDCAR4} The nucleus $(-)^{**}\colon L\longrightarrow L$  preserves finite joins\textup;
\item \label{EDCAR5} The nucleus $(-)^{**}\colon L\longrightarrow L$  is a lattice homomorphism\textup;
\item \label{EDCAR6} $(\tbigwedge a_{i})^{*}\le\tbigvee a_i^{*}$ for all $\{a_i\}_{i=1}^n\subseteq B_L$\textup;
 \item \label{EDCAR7}  If $\{a_i\}_{i=1}^n\subseteq L$ satisfies $(\tbigvee a_{i})^*=0$ then $\tbigvee a_{i}^{**}=1$.
\end{enumerate}
\end{proposition}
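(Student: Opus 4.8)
The plan is to sort the seven conditions into two groups and then bridge them. Conditions \ref{EDCAR1}, \ref{EDCAR3}, \ref{EDCAR4}, \ref{EDCAR5} and \ref{EDCAR6} are ``inequality/operator'' forms that I expect to be interderivable by purely formal manipulation using the first De Morgan law together with Proposition~\ref{basic}, whereas \ref{EDCAR2} and \ref{EDCAR7} are ``special-value'' reformulations (of the shape $\tbigwedge a_i=0\Rightarrow\tbigvee a_i^*=1$, respectively $(\tbigvee a_i)^*=0\Rightarrow\tbigvee a_i^{**}=1$) whose nontrivial implications will require a genuine argument. Throughout I would reduce each ``for all finite families'' statement to its binary instance where convenient, the nullary and unary cases being immediate since $0^{**}=0$ and $1^{**}=1$.

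For the inequality cluster I would first observe that \ref{EDCAR3} and \ref{EDCAR6} are literally the same statement read through the first De Morgan law: since $(\tbigvee a_i)^{**}=(\tbigwedge a_i^*)^*$ and $\tbigvee a_i^{**}=\tbigvee (a_i^*)^*$, condition \ref{EDCAR3} says exactly that $(\tbigwedge b_i)^*\le\tbigvee b_i^*$ as $b_i=a_i^*$ ranges over $B_L$, which is \ref{EDCAR6}. Next, \ref{EDCAR1} trivially implies \ref{EDCAR6} by restriction, and for the converse I would use the identity $(\tbigwedge a_i)^*=(\tbigwedge a_i^{**})^*$: this follows from Proposition~\ref{basic}\,\ref{basic1}, because from $\tbigwedge a_i\le\tbigwedge a_i^{**}\le(\tbigwedge a_i)^{**}$, applying $(-)^*$ reverses the order to $(\tbigwedge a_i)^*=(\tbigwedge a_i)^{***}\le(\tbigwedge a_i^{**})^*\le(\tbigwedge a_i)^*$, forcing equality. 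Feeding $b_i=a_i^{**}\in B_L$ into \ref{EDCAR6} then yields $(\tbigwedge a_i)^*\le\tbigvee a_i^*$, i.e.\ \ref{EDCAR1}. Finally, \ref{EDCAR3}$\Leftrightarrow$\ref{EDCAR4} because the reverse inequality $a^{**}\vee b^{**}\le(a\vee b)^{**}$ is automatic (so \ref{EDCAR3} at $n=2$ is precisely preservation of binary joins), and \ref{EDCAR4}$\Leftrightarrow$\ref{EDCAR5} because $(-)^{**}$ always preserves finite meets (as noted after Proposition~\ref{basic}), so being a lattice homomorphism adds nothing beyond preserving finite joins.

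It remains to attach \ref{EDCAR2} and \ref{EDCAR7}. The implications \ref{EDCAR1}$\Rightarrow$\ref{EDCAR2} and \ref{EDCAR3}$\Rightarrow$\ref{EDCAR7} are immediate (specialise to the case where the meet, respectively the pseudocomplement of the join, equals $0$). For \ref{EDCAR7}$\Rightarrow$\ref{EDCAR2} I would apply \ref{EDCAR7} to the family $\{a_i^*\}$: if $\tbigwedge a_i=0$ then Proposition~\ref{basic}\,\ref{basic1} gives $\tbigwedge a_i^{**}=0$, whence $(\tbigvee a_i^*)^*=\tbigwedge a_i^{**}=0$ by first De Morgan, so \ref{EDCAR7} returns $\tbigvee a_i^{*}=\tbigvee (a_i^*)^{**}=1$. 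The main obstacle is the remaining direction \ref{EDCAR2}$\Rightarrow$\ref{EDCAR1}, where one must upgrade a statement about families with zero meet to an inequality for arbitrary families. My plan here is a substitution-and-currying trick: given $a_1,\dots,a_n$ with $m=\tbigwedge a_i$, apply \ref{EDCAR2} to the family $\{m^*\wedge a_i\}$, whose meet is $m^*\wedge m=0$. Using the exponential law $(m^*\wedge a_i)^*=m^*\to a_i^*$, this yields $\tbigvee_i(m^*\to a_i^*)=1$; meeting this identity with $m^*$ and distributing (the frame law) gives $m^*=\tbigvee_i\bigl(m^*\wedge(m^*\to a_i^*)\bigr)$, and since each summand is $\le a_i^*$ by the adjunction $x\wedge(x\to y)\le y$, we conclude $m^*=(\tbigwedge a_i)^*\le\tbigvee a_i^*$, which is \ref{EDCAR1}. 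I expect this adjunction computation to be the delicate point; everything else is bookkeeping.
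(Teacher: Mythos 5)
Your proposal is correct and is essentially the paper's own argument with the details filled in: the paper merely remarks that \ref{EDCAR1}$\iff$\ref{EDCAR2}$\implies$\ref{EDCAR3}$\iff$\ref{EDCAR4}$\iff$\ref{EDCAR5}$\iff$\ref{EDCAR6}$\iff$\ref{EDCAR7} are easy formal verifications and singles out \ref{EDCAR7}$\implies$\ref{EDCAR2} as the step requiring Proposition~\ref{basic}\,\ref{basic1}, which is exactly the pivot of your argument as well. Your bookkeeping differs only superficially (you organize \ref{EDCAR1}, \ref{EDCAR3}--\ref{EDCAR6} into one cluster and close the loop with the currying/distributivity trick for \ref{EDCAR2}$\implies$\ref{EDCAR1}, a cosmetic variant of the trick the paper itself uses for the infinite analogue in Proposition~\ref{eqq1}), and every step you give checks out.
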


 Indeed, it is easy to check that \ref{EDCAR1}$\iff$\ref{EDCAR2}$\implies$\ref{EDCAR3}$\iff$\ref{EDCAR4}$\iff$\ref{EDCAR5}$\iff$\ref{EDCAR6}$\iff$\ref{EDCAR7}, and \ref{EDCAR7}$\implies$\ref{EDCAR2} is also true because of Lemma~\ref{basic}\,\ref{basic1}.

Note that we actually have the equality in \ref{EDCAR1}, \ref{EDCAR3}, \ref{EDCAR6} and \ref{EDCAR7}.  A locale satisfying any of the conditions of the proposition above is called \emph{extremally disconnected} or \emph{De Morgan}. Subsequently, we shall use ED as a shorthand for an extremally disconnected locale. Clearly, if $X$ is a topological space, $X$ is extremally disconnected in the usual sense if and only if the locale $\Omega(X)$ is extremally disconnected.

The main motivation for this paper is to study the infinite versions of the conditions in Proposition~\ref{EDCAR}.

\section{Infinite versions of extremal disconnectedness}\label{sec3}

\subsection*{Infinitely De Morgan locales}
We shall say that a locale $L$ is \emph{infinitely De Morgan} if it satisfies the \emph{infinite} second De Morgan law, i.e. if
 \begin{gather}\label{IDM} (\tbigwedge a_{i})^{*}\le\tbigvee a_i^{*},\qquad \{a_i\}_{i\in I}\subseteq L.\tag{IDM} \end{gather}
 For brevity such a locale will be referred to as an \emph{IDM} locale. (Note again that  we actually have the equality in (\ref{IDM})).

We have the following characterization (cf. Proposition~\ref{EDCAR}\,\ref{EDCAR1}--\ref{EDCAR2}):

\begin{proposition}\label{eqq1} The following conditions are equivalent for a locale $L$\textup:
\begin{enumerate}[label={\rm(\roman*)}]
\item \label{eqq11} $L$ is an IDM locale\textup;
\item \label{eqq12} If $\{a_i\}_{i\in I}\subseteq L$ satisfies $\tbigwedge a_{i}=0$ then $\tbigvee a_{i}^{*}=1$.
\end{enumerate}
\end{proposition}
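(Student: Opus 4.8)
The plan is to prove the two implications separately, with essentially all the content lying in \ref{eqq12}$\implies$\ref{eqq11}, since \ref{eqq11}$\implies$\ref{eqq12} is immediate. Indeed, if $\tbigwedge a_i=0$ then $(\tbigwedge a_i)^*=0^*=1$, and the infinite De Morgan law (\ref{IDM}) forces $1\le \tbigvee a_i^*$, that is, $\tbigvee a_i^*=1$.

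For the converse, I would fix an arbitrary family $\{a_i\}_{i\in I}\subseteq L$ and write $c=\tbigwedge a_i$. The trivial half of the De Morgan inequality, namely $\tbigvee a_i^*\le c^*$, holds automatically: from $c\le a_i$ one gets $a_i^*\le c^*$ for every $i$, and hence $\tbigvee a_i^*\le c^*$. So the whole problem reduces to establishing the reverse inequality $c^*\le \tbigvee a_i^*$.

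The key idea --- and the only nonobvious step --- is to feed the \emph{augmented} family $\{a_i\}_{i\in I}\cup\{c^*\}$ into hypothesis \ref{eqq12}. Its meet is $c\wedge c^*=0$, so \ref{eqq12} yields $\bigl(\tbigvee a_i^*\bigr)\vee c^{**}=1$. Writing $b=\tbigvee a_i^*$, I would then meet both sides with $c^*$ and use frame distributivity together with $c^*\wedge c^{**}=0$:
\[
c^*=c^*\wedge\bigl(b\vee c^{**}\bigr)=(c^*\wedge b)\vee(c^*\wedge c^{**})=c^*\wedge b\le b,
\]
where $c^*\wedge b=b$ because $b\le c^*$ by the trivial inequality above. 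This gives $c^*\le\tbigvee a_i^*$ (in fact equality), completing \ref{eqq12}$\implies$\ref{eqq11}.

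I expect no genuine obstacle here: once one spots the augmentation by $c^*=(\tbigwedge a_i)^*$, which collapses the meet of the enlarged family to $0$ and lets \ref{eqq12} do its work, the rest is routine distributive-lattice manipulation, identical in spirit to the finite equivalence \ref{EDCAR1}$\iff$\ref{EDCAR2} in Proposition~\ref{EDCAR}. The only point worth double-checking is that the argument never secretly uses finiteness of $I$; it does not, since the join $\tbigvee a_i^*$ and the single extra generator $c^*$ are treated uniformly, and the distributivity step involves only a binary meet over a binary join.
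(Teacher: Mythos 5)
Your proposal is correct and follows essentially the same route as the paper: the paper also augments the family with $a=(\tbigwedge a_i)^*$, applies condition \ref{eqq12} to get $(\tbigvee a_i^*)\vee a^*=1$, and concludes $a\le\tbigvee a_i^*$ by the same (implicit) distributivity argument you spell out. Your write-up merely makes explicit the final step the paper compresses into ``Hence.''
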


\begin{proof} \ref{eqq11}$\implies$\ref{eqq12} is obvious. \ref{eqq12}$\implies$\ref{eqq11}: Let $\{a_i\}_{i\in I}\subseteq L$ and $a=(\tbigwedge a_i)^*$. Since $(\tbigwedge a_i)\wedge a=0$, it follows that $(\tbigvee a_{i}^{*})\vee a^*=1$. Hence, $a\le \tbigvee a_{i}^{*}$.
\end{proof}

If we restrict to the spatial case, we obtain the following characterization:

\begin{corollary}
A topological space $X$ is IDM if and only if for each family of closed sets with dense union, the family of their interiors covers $X$.
\end{corollary}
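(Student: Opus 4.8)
The plan is to apply Proposition~\ref{eqq1} to the frame $L=\Omega(X)$ and then translate the order-theoretic condition~\ref{eqq12} of that proposition into topological language by means of the complementation bijection between open and closed subsets of $X$.

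First I would record the concrete form of the frame operations in $\Omega(X)$. Arbitrary joins are unions, the bottom and top elements are $\emptyset$ and $X$, the meet of a family $\{U_i\}_{i\in I}$ of open sets is $\bigwedge_i U_i=\operatorname{int}\big(\bigcap_i U_i\big)$, and the pseudocomplement of an open set $U$ is $U^*=\operatorname{int}(X\setminus U)=X\setminus\overline U$. With these identities at hand, condition~\ref{eqq12} in Proposition~\ref{eqq1} reads: whenever a family of open sets $\{U_i\}_{i\in I}$ satisfies $\operatorname{int}\big(\bigcap_i U_i\big)=\emptyset$, then $\bigcup_i(X\setminus\overline{U_i})=X$.

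Next I would set up the correspondence. Given a family of closed sets $\{C_i\}_{i\in I}$, put $U_i=X\setminus C_i$; this is a bijection between families of closed sets and families of open sets. Two computations then complete the translation. On the one hand, $\bigcup_i C_i$ is dense exactly when its complement $X\setminus\overline{\bigcup_i C_i}=\operatorname{int}\big(\bigcap_i U_i\big)$ is empty, i.e. precisely when $\bigwedge_i U_i=0$ in $\Omega(X)$; note that it is the formula $\bigwedge_i U_i=\operatorname{int}\big(\bigcap_i U_i\big)$ that forces \emph{density} of the union here, rather than the full intersection being empty. On the other hand, $\operatorname{int}(C_i)=\operatorname{int}(X\setminus U_i)=U_i^*$, so the interiors of the $C_i$ cover $X$ precisely when $\bigvee_i U_i^*=1$.

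Combining the two equivalences, condition~\ref{eqq12} of Proposition~\ref{eqq1} for $\Omega(X)$ becomes exactly the assertion that for every family of closed sets with dense union the family of their interiors covers $X$; since the bijection and both translations are equivalences, the biconditional is preserved, giving the corollary. I do not expect any genuine obstacle here: the only point requiring care is the identity $\bigwedge_i U_i=\operatorname{int}\big(\bigcap_i U_i\big)$ for infinite meets (as opposed to the naive set-theoretic intersection), which is precisely what accounts for the appearance of density in the topological reformulation.
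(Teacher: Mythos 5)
Your proposal is correct and is exactly the argument the paper intends: the corollary is stated without proof as an immediate consequence of Proposition~\ref{eqq1}\,\ref{eqq12}, obtained by the standard spatial dictionary (meets are interiors of intersections, pseudocomplements are exteriors) under the complementation bijection between open and closed sets. Your explicit attention to the identity $\tbigwedge_i U_i=\operatorname{int}\bigl(\bigcap_i U_i\bigr)$, which is what turns ``empty meet'' into ``dense union,'' is precisely the one point where care is needed, and you handle it correctly.
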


\begin{remarks}\label{remarks3.3} (1) A frame which is also a coframe does not necessarily satisfy (\ref{IDM}); one has an infinite second De Morgan law for supplements, but these need not coincide with pseudocomplements. For instance the locale $L=[0,1]$ is totally ordered (and thus a coframe) but not IDM, see (2) below.\\[2mm]
(2) It follows immediately that if $0$ is completely prime in a locale $L$ (i.e. if for each arbitrary family $\{a_i\}_{i\in I}\subseteq L$, $\tbigwedge a_i =0$ implies $a_i=0$ for some $i\in I$) then $L$ is an IDM locale. \\
The converse is true for linear locales (i.e. those which are chains). Indeed, let $L$ be a chain, then we have $a^{*}=0$ whenever $a\ne 0$ (and $0^{*}=1$), so let $\tbigwedge a_i =0$ for $\{a_i\}_{i\in I}\subseteq L$. By (\ref{IDM}), we have $\tbigvee a_{i}^{*}=1$, and thus there is some $i\in I$ with $a_{i}=0$.\\[2mm]
(3) Let $L$ be \emph{any} locale and define $L^*$ to be the poset obtained by adding a new bottom element $\bot$ to $L$ (cf. \cite[p.~504]{FS}). It is easily seen that $L^*$ is also a locale and that $\bot$ is completely prime. Accordingly, one has that this new locale is IDM.\\[2mm]
 (4) Any complete Boolean algebra is an IDM locale, but there are non Boolean IDM locales. For instance, any non Boolean locale $L$ such that $0$ is completely prime. An easy such example is the linear locale $L=\mathbb{N}\cup\{0,+\infty\}$, or any locale constructed as in (3) above.
\end{remarks}

 IDM locales are very close to being Boolean; in fact, under the very weak separation axiom of weak subfitness, both concepts coincide.
Recall that a frame is called \emph{weakly subfit} \cite{WS} if for each $a\ne 0$ there is some $c\ne 1$ with $c\vee a=1$. This property can also be characterized  by the following formula for pseudocomplementation:

\begin{lemma}\textup{(\cite[Theorem 5.2]{SUB})}
Let $L$ be a locale. The formula
\[a^*=\tbigwedge \{c\in L\mid c\vee a=1\}\quad\textrm{for all } a\in L\]
 is valid if and only if $L$ is weakly subfit.
\end{lemma}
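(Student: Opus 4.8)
The plan is to abbreviate $\sigma(a)=\tbigwedge\{c\in L\mid c\vee a=1\}$ and to begin by recording the inequality $a^*\le\sigma(a)$, which I claim holds in \emph{every} locale. Indeed, for any $c$ with $c\vee a=1$, frame distributivity gives $a^*=a^*\wedge(c\vee a)=(a^*\wedge c)\vee(a^*\wedge a)=a^*\wedge c$, so $a^*\le c$; taking the meet over all such $c$ yields $a^*\le\sigma(a)$. Consequently the asserted formula $a^*=\sigma(a)$ is equivalent to the reverse inequality $\sigma(a)\le a^*$, which, since $a^*=a\to0$, is in turn equivalent by the Heyting adjunction to $\sigma(a)\wedge a=0$. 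Thus the whole statement reduces to showing that the identity $\sigma(a)\wedge a=0$ holds for all $a\in L$ if and only if $L$ is weakly subfit.

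For the implication \emph{weakly subfit $\implies$ formula}, I would argue by contradiction: suppose $x:=\sigma(a)\wedge a\neq0$ for some $a$. Weak subfitness supplies some $c\neq1$ with $c\vee x=1$. Since $x\le a$, we also get $c\vee a=1$, so $c$ lies in the indexing set and hence $\sigma(a)\le c$. But then $x=\sigma(a)\wedge a\le c$, forcing $1=c\vee x=c$, contradicting $c\neq1$. Therefore $\sigma(a)\wedge a=0$ for every $a$, i.e. the formula holds.

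For the converse \emph{formula $\implies$ weakly subfit}, take any $a\neq0$; then $a^*\neq1$ (otherwise $a=a\wedge a^*=0$), so by the formula $\sigma(a)\neq1$. Now $1$ always belongs to the set $\{c\in L\mid c\vee a=1\}$, and any member $c<1$ of that set would force $\sigma(a)\le c<1$; hence $\sigma(a)\neq1$ guarantees the existence of some $c\neq1$ with $c\vee a=1$, which is precisely weak subfitness. The only mildly delicate point is this last bookkeeping step, reading off an actual witness $c\neq1$ from the strict inequality $\sigma(a)\neq1$. The genuinely substantive ingredient is the opening distributivity computation establishing $a^*\le\sigma(a)$ and the reduction to $\sigma(a)\wedge a=0$; after that, both directions follow from short order-theoretic manipulations.
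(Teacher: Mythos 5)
Your proof is correct. Note, however, that the paper itself offers no proof of this lemma: it is imported by citation from Picado--Pultr \cite[Theorem 5.2]{SUB}, so there is no internal argument to compare against; your write-up supplies a self-contained proof where the paper has none. The structure you chose is the natural one: the distributivity computation showing $a^*\le c$ whenever $c\vee a=1$ (hence $a^*\le\sigma(a)$ unconditionally), the Heyting-adjunction reduction of the formula to $\sigma(a)\wedge a=0$, and then the two short order-theoretic arguments. One stylistic remark on the last step: your phrasing ``any member $c<1$ of that set would force $\sigma(a)\le c<1$'' inverts the logical direction you actually need; the clean statement is the contrapositive --- if the set $\{c\mid c\vee a=1\}$ contained no element other than $1$, its meet would be $1$, so $\sigma(a)\neq1$ forces the existence of a witness $c\neq1$. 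This is exactly what you conclude, so it is a matter of wording rather than a gap.
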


Any Boolean algebra is trivially weakly subfit.  Moreover:

\begin{fact}\label{sbb} Let $L$ be a locale. Then $L$ is a complete Boolean algebra if and only if it is a weakly subfit and IDM locale.
\end{fact}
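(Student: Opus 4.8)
The plan is to treat the two implications separately, with the forward direction being routine and the converse carrying the real content. Throughout I will use that in any locale $a\wedge a^*=0$ and $a\le a^{**}$, together with the first De Morgan law.

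For the forward direction, suppose $L$ is a complete Boolean algebra, so that $a^*$ is the genuine complement $\neg a$ of each $a$. Weak subfitness is then immediate: given $a\ne 0$, the element $c=a^*$ satisfies $c\vee a=1$ and $c\ne 1$. For \eqref{IDM} I would use that the complementation $\neg=(-)^*$ is an order anti-automorphism of $L$ (an order-reversing bijection equal to its own inverse), and such a map necessarily sends infima to suprema; thus $(\bigwedge a_i)^*=\bigvee a_i^*$, which is precisely \eqref{IDM}. Alternatively one checks directly that $\neg a_i\le\bigvee_j\neg a_j$ gives $\neg(\bigvee_j\neg a_j)\le\neg\neg a_i=a_i$ for each $i$, whence $\neg(\bigvee_j\neg a_j)\le\bigwedge_i a_i$; applying $\neg$ once more yields $(\bigwedge a_i)^*\le\bigvee a_i^*$.

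For the converse, assume $L$ is weakly subfit and \eqref{IDM}. Since $L$ is a frame it is already a complete distributive lattice, so it suffices to prove that every element is complemented, for which in turn it suffices to show that every element of $L$ is \emph{regular}, i.e. $a=a^{**}$ for all $a$. Indeed, once all elements are regular, $a\vee a^*$ is regular and $(a\vee a^*)^*=a^*\wedge a^{**}=0$, so $a\vee a^*=(a\vee a^*)^{**}=0^*=1$, exhibiting $a^*$ as the complement of $a$ and making $L$ a complete Boolean algebra. The key device is the weak-subfitness formula $a^*=\bigwedge C_a$, where $C_a=\{c\in L\mid c\vee a=1\}$, to which I would feed \eqref{IDM}.

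The crucial observation is that every $c\in C_a$ satisfies $c^*\le a$: indeed $c^*=c^*\wedge(c\vee a)=(c^*\wedge c)\vee(c^*\wedge a)=c^*\wedge a\le a$. Now \eqref{IDM}, which holds with equality, gives $a^{**}=(\bigwedge C_a)^*=\bigvee_{c\in C_a}c^*\le a$, and since $a\le a^{**}$ always, we conclude $a=a^{**}$, as required. The step most in need of care --- and the conceptual heart of the argument --- is recognizing that the correct family to substitute into \eqref{IDM} is exactly $C_a$, so that its meet is $a^*$ by weak subfitness while each of its members contributes a pseudocomplement lying below $a$; the remaining verifications are the routine identities above once this pairing is in place.
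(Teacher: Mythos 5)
Your proof is correct and takes essentially the same approach as the paper: apply (IDM) to the family $C_a=\{c\in L\mid c\vee a=1\}$, whose meet is $a^*$ by the weak-subfitness formula, and use $c^*\le a$ for each $c\in C_a$ to conclude $a^{**}\le a$. You merely make explicit two steps the paper leaves implicit, namely the routine necessity direction and the deduction of Booleanness from $a=a^{**}$ holding for all $a$.
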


\begin{proof} We only need to prove sufficiency. Let $L$ be a weakly subfit and IDM locale and $a\in L$. By the previous lemma and the infinite second De Morgan law we get
\[a^{**} = ( \tbigwedge \{c\in L\mid c\vee a=1\})^*\le \tbigvee \{c^* \mid c\vee a=1\}.\]
Now if $c\vee a=1$ it follows that $c^*\le a$, hence $a^{**}\leq a$ for all $a\in L$. Thus $L$ is Boolean.
\end{proof}

\begin{remark}
 IDM does not imply weak subfitness and conversely. Indeed, the frame $L=\mathbb{N}\cup\{0,+\infty\}$ is IDM but not weakly subfit, and the cofinite topology on an infinite set is weakly subfit and not IDM.
\end{remark}

The situation is very different to the case of extremally disconnected spaces, where the Hausdorff axiom not only does not imply discreteness, but arguably the most important extremally disconnected spaces are Hausdorff (since it is in presence of this axiom that extremal disconnectedness has \emph{something to do} with connectedness).

\subsection*{Infinitely extremally disconnected locales}
We shall say that a locale $L$ is \emph{infinitely extremally disconnected} if the nucleus $(-)^{**}\colon L\longrightarrow L$  preserves arbitrary joins, i.e. if
 \begin{gather}\label{IED} (\tbigvee a_{i})^{**}\le\tbigvee a_{i}^{**},\qquad \{a_i\}_{i\in I}\subseteq L,\tag{IED} \end{gather}
 For brevity such a locale will be referred to as an \emph{IED} locale. (Note again that  we actually have the equality sign in (\ref{IED})).

We have the following characterization (cf. Proposition~\ref{EDCAR}\,\ref{EDCAR3}--\ref{EDCAR7}):

\begin{proposition}\label{eqq2} The following conditions are equivalent for a locale $L$\textup:
\begin{enumerate}[label={\rm(\roman*)}]
\item \label{eqq21} $L$ is an IED locale\textup;
\item \label{eqq22} The nucleus $(-)^{**}\colon L\longrightarrow L$  preserves arbitrary joins\textup;
\item \label{eqq23} The nucleus $(-)^{**}\colon L\longrightarrow L$ is a frame homomorphism\textup;
\item \label{eqq24} $(\tbigwedge a_{i})^{*}\le\tbigvee a_i^{*}$ for all $\{a_i\}_{i\in I}\subseteq B_L$\textup;
\item \label{eqq25} If $\{a_i\}_{i\in I}\subseteq L$ satisfies $(\tbigvee a_{i})^*=0$ then $\tbigvee a_{i}^{**}=1$.
\end{enumerate}
\end{proposition}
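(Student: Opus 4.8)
The plan is to dispatch the three ``formal'' equivalences \ref{eqq21}$\Leftrightarrow$\ref{eqq22}$\Leftrightarrow$\ref{eqq23} with essentially no computation, and then to connect the two genuinely De~Morgan-flavoured conditions \ref{eqq24} and \ref{eqq25} to join-preservation. For \ref{eqq21}$\Leftrightarrow$\ref{eqq22} I would simply recall that $(-)^{**}$ is a nucleus, hence monotone, so from $a_i\le\tbigvee a_i$ one always gets $\tbigvee a_i^{**}\le(\tbigvee a_i)^{**}$; thus the inequality (\ref{IED}) is in fact an equality, and it says precisely that $(-)^{**}$ preserves arbitrary joins. For \ref{eqq22}$\Leftrightarrow$\ref{eqq23} I would invoke that every nucleus preserves finite meets (Proposition~\ref{basic} together with $1^{**}=1$) and that $0^{**}=0$; since a frame homomorphism is exactly a map preserving finite meets and arbitrary joins, being a frame homomorphism reduces, for $(-)^{**}$, to preserving arbitrary joins.

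The equivalence \ref{eqq21}$\Leftrightarrow$\ref{eqq24} I would obtain from the first De Morgan law together with the description $B_L=\{a^*\mid a\in L\}$. For \ref{eqq24}$\implies$\ref{eqq21}, given an arbitrary family $\{c_i\}_{i\in I}\subseteq L$, I would set $a_i:=c_i^*\in B_L$; then the first De Morgan law gives $\tbigwedge a_i=\tbigwedge c_i^*=(\tbigvee c_i)^*$, so that $(\tbigwedge a_i)^*=(\tbigvee c_i)^{**}$, while $\tbigvee a_i^*=\tbigvee c_i^{**}$, and condition~\ref{eqq24} reads verbatim as (\ref{IED}). Conversely, for \ref{eqq21}$\implies$\ref{eqq24}, given $\{a_i\}_{i\in I}\subseteq B_L$ I would write $a_i=x_i^*$ with $x_i:=a_i^*$ and run the same computation backwards, using $x_i^{**}=a_i^{***}=a_i^*$.

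The genuine content, and the step I expect to be the main obstacle, is \ref{eqq25}$\implies$\ref{eqq21}; the reverse implication \ref{eqq21}$\implies$\ref{eqq25} is immediate, since $(\tbigvee a_i)^*=0$ forces $(\tbigvee a_i)^{**}=1$, whence (\ref{IED}) gives $\tbigvee a_i^{**}=1$. For the converse I would adapt the augmentation trick used for Proposition~\ref{eqq1}. Given $\{a_i\}_{i\in I}$, put $s=\tbigvee a_i$ and enlarge the family by the single element $s^*$. The enlarged join is $s\vee s^*$, whose pseudocomplement is $s^*\wedge s^{**}=0$, so condition~\ref{eqq25} applies to the enlarged family and yields $\left(\tbigvee_{i} a_i^{**}\right)\vee(s^*)^{**}=1$. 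Since $(s^*)^{**}=s^{***}=s^*$, this reads $\tbigvee a_i^{**}\vee s^*=1$. Meeting both sides with $s^{**}$ and using frame distributivity together with $s^{**}\wedge s^*=0$ then gives $s^{**}=s^{**}\wedge\tbigvee a_i^{**}\le\tbigvee a_i^{**}$, which is exactly (\ref{IED}).

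The only points requiring care in the last step are bookkeeping of the augmented index set and the repeated use of $s^{***}=s^*$; everything else is a direct transcription of the finite arguments encoded in Proposition~\ref{EDCAR} and Proposition~\ref{eqq1} into the infinitary setting, where no finiteness of the index family is ever used.
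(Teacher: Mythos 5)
Your proposal is correct and follows essentially the same route as the paper: the equivalences \ref{eqq21}$\iff$\ref{eqq22}$\iff$\ref{eqq23} are dispatched formally, the link with \ref{eqq24} comes from writing regular elements as pseudocomplements and invoking the first De Morgan law, and the key step \ref{eqq25}$\implies$\ref{eqq21} uses exactly the paper's trick of adjoining $(\tbigvee a_i)^*$ to the family and cutting down by $(\tbigvee a_i)^{**}$. The only difference is organizational (you prove \ref{eqq21}$\iff$\ref{eqq24} and \ref{eqq21}$\iff$\ref{eqq25} as spokes around \ref{eqq21}, while the paper closes a single cycle via \ref{eqq24}$\implies$\ref{eqq25}), which changes nothing of substance.
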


\begin{proof}
  \ref{eqq21}$\iff$\ref{eqq22}$\iff$\ref{eqq23}$\implies$\ref{eqq24} are obvious.\\[2mm]
 \ref{eqq24}$\implies$\ref{eqq25}: Let $\{a_i\}_{i\in I}\subseteq L$ such that $(\tbigvee a_{i})^*=0$. Then $\{a_i^*\}_{i\in I}\subseteq B_L$ and so $1=(\tbigvee a_{i})^{**}=(\tbigwedge a_{i}^*)^{*}\le\tbigvee a_i^{**}$.\\[2mm]
 \ref{eqq25}$\implies$\ref{eqq21}: Let $\{a_i\}_{i\in I}\subseteq L$ and $a=(\tbigvee a_{i})^{*}$. Since $(a\vee (\tbigvee a_{i}))^*=0$, it follows that $a^{**}\vee(\tbigvee a_{i}^{**}) =1$. Hence, $a^{*}=(\tbigvee a_{i})^{**}\le \tbigvee a_{i}^{**}$.
\end{proof}

If we restrict to the spatial case, we obtain the following characterization:

\begin{corollary}
A space $X$ is IED if and only if for each family of open sets with dense union, the family of the interiors of their closures covers $X$.
\end{corollary}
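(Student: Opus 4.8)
The plan is to deduce the statement directly from the equivalence \ref{eqq21}$\iff$\ref{eqq25} in Proposition~\ref{eqq2}, applied to the frame $L=\Omega(X)$, by rewriting each frame-theoretic ingredient in spatial terms. (Here, by definition, $X$ is IED precisely when the locale $\Omega(X)$ is IED.) First I would recall the standard spatial formulas for pseudocomplementation in $\Omega(X)$: for an open set $U$ one has $U^{*}=\operatorname{int}(X\setminus U)=X\setminus\overline{U}$, and consequently
\[
U^{**}=X\setminus\overline{X\setminus\overline{U}}=\operatorname{int}(\overline{U}),
\]
the interior of the closure (equivalently, the regular open set generated by $U$). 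I would also use that joins in $\Omega(X)$ are computed as unions.

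Next I would translate the hypothesis of condition \ref{eqq25}. Given a family $\{U_i\}_{i\in I}$ of open sets we have $\tbigvee U_i=\tbigcup U_i$, so $(\tbigvee U_i)^{*}=X\setminus\overline{\tbigcup U_i}$; this is the empty set exactly when $\overline{\tbigcup U_i}=X$, i.e.\ exactly when the union $\tbigcup U_i$ is dense in $X$. Thus the premise $(\tbigvee a_i)^{*}=0$ in \ref{eqq25} is precisely the requirement that the family have dense union.

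Finally I would translate the conclusion. By the formula above, $U_i^{**}=\operatorname{int}(\overline{U_i})$, and since the join is again a union, $\tbigvee U_i^{**}=\tbigcup\operatorname{int}(\overline{U_i})$; hence the equality $\tbigvee a_i^{**}=1$ says exactly that the interiors of the closures $\operatorname{int}(\overline{U_i})$ cover $X$. Combining the two translations, condition \ref{eqq25} for $\Omega(X)$ reads word for word as ``for every family of open sets with dense union, the interiors of their closures cover $X$'', and since \ref{eqq21}$\iff$\ref{eqq25}, the corollary follows. I expect no substantive obstacle here: the only work is the routine bookkeeping of the spatial pseudocomplement identities, the one point requiring a little care being the reformulation of $(\tbigvee U_i)^{*}=0$ as density of $\tbigcup U_i$.
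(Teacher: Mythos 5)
Your proof is correct and matches the paper's (implicit) argument exactly: the corollary is stated as an immediate consequence of Proposition~\ref{eqq2}, obtained precisely by applying the equivalence \ref{eqq21}$\iff$\ref{eqq25} to $L=\Omega(X)$ and using the standard spatial identities $U^{*}=X\setminus\overline{U}$ and $U^{**}=\operatorname{int}(\overline{U})$, with joins computed as unions. Nothing is missing; the translation of $(\tbigvee U_i)^{*}=0$ into density of $\tbigcup U_i$ and of $\tbigvee U_i^{**}=1$ into the covering condition is exactly the intended bookkeeping.
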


\begin{remarks} \label{remarks3.9} (1) For a finite locale $L$, it follows from Proposition~\ref{EDCAR} that all conditions in Propositions~\ref{eqq1} and \ref{eqq2} are equivalent. Consequently, in what follows we shall restrict our attention to infinite locales.\\[2mm]
 (2) In any irreducible (or hyperconnected) locale $L$ i.e. such that $B_L=\{0,1\}$, or equivalently such that $0$ is prime (cf. \cite{IRRED}) condition \ref{eqq24} in Proposition~\ref{eqq2} is trivially satisfied. Consequently, any irreducible locale $L$ is IED. In particular, linear locales are clearly irreducible, and hence IED.\\[2mm]
 (3) Since (\ref{IDM}) trivially implies condition \ref{eqq24} in Proposition~\ref{eqq2}, it follows that any IDM locale is IED, but there are IED locales which fail to be IDM.
 An easy such example is the cofinite topology on an infinite set.\\[2mm]
(4) Any IED locale is obviously extremally disconnected. However the converse is false, as any non-Boolean regular extremally disconnected locale shows (see Fact~\ref{ori} below). An easy such example is the Stone-\v Cech compactification of the frame of natural numbers.\\[2mm]
(5) In any semi-irreducible locale $L$ i.e. such that $B_L$ is finite, condition \ref{eqq24} in Proposition~\ref{eqq2} is clearly satisfied if the locale is ED. Consequently, any semi-irreducible extremally disconnected locale $L$ is IED.
 \end{remarks}

  Consequently we  have the following chain of implications
   \[\text{(CBA)}\implies \text{(\ref{IDM})}\implies \text{(\ref{IED})}\implies\text{(\ref{edframebat})},\]
   and none of them can be reversed.\pagebreak
Any IDM locale is trivially \donotbreakdash{$\bot$}scattered (cf. Proposition~\ref{eqq1} and Proposition~\ref{wedgep}\,\ref{wedgep6}). Moreover:

\begin{fact} \label{fact3.10} Let $L$ be a locale. Then $L$ is IDM if and only if it is \donotbreakdash{$\bot$}scattered and IED.
\end{fact}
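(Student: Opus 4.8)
The plan is to dispatch the two implications separately, with the forward direction being essentially a matter of collecting facts already recorded above. For the necessity of the two conditions, I would simply observe that an IDM locale is $\bot$-scattered by the remark immediately preceding the statement (via Proposition~\ref{eqq1} together with Proposition~\ref{wedgep}\,\ref{wedgep6}), and that it is IED because (\ref{IDM}) trivially entails condition \ref{eqq24} of Proposition~\ref{eqq2}, as already noted in Remarks~\ref{remarks3.9}\,(3). So no new work is required in this direction.

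For the substantial direction, suppose $L$ is both $\bot$-scattered and IED, and fix an arbitrary family $\{a_i\}_{i\in I}\subseteq L$; the goal is to verify $(\tbigwedge a_i)^*\le\tbigvee a_i^*$. The key idea is to transport the problem into the Booleanization $B_L$, where IED already supplies a De Morgan law, using $\bot$-scatteredness to effect the transfer. Concretely, $\bot$-scatteredness gives the equality $(\tbigwedge a_i)^{**}=\tbigwedge a_i^{**}$ (the reverse inequality being automatic by monotonicity of $(-)^{**}$), and taking pseudocomplements together with the identity $x^{***}=x^*$ turns this into
\[(\tbigwedge a_i)^* = \bigl((\tbigwedge a_i)^{**}\bigr)^* = \bigl(\tbigwedge a_i^{**}\bigr)^*.\]
Now $\{a_i^{**}\}_{i\in I}\subseteq B_L$, so condition \ref{eqq24} of Proposition~\ref{eqq2} applies to this family and yields $\bigl(\tbigwedge a_i^{**}\bigr)^*\le\tbigvee (a_i^{**})^*$; finally $(a_i^{**})^*=a_i^{***}=a_i^*$, so the right-hand side collapses to $\tbigvee a_i^*$, and chaining the three steps gives exactly (\ref{IDM}).

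I do not anticipate a genuine obstacle here: once one recognizes that IED is precisely the second De Morgan law restricted to regular elements and that $\bot$-scatteredness is what allows $(\tbigwedge a_i)^*$ to be rewritten through the regular elements $a_i^{**}$, the converse is a three-line computation. The only point demanding a little care is the bookkeeping with the starring identities—namely that $\bot$-scatteredness delivers a genuine \emph{equality} $(\tbigwedge a_i)^{**}=\tbigwedge a_i^{**}$ and that $x^{***}=x^*$—but these are entirely standard and already in use throughout the preliminaries.
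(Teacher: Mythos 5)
Your proof is correct and takes essentially the same route as the paper: necessity is dispatched by the preceding remarks in both cases, and sufficiency transfers the De Morgan law from the regular elements back to $L$ by using $\bot$-scatteredness to identify $(\tbigwedge a_i)^{*}$ with a pseudocomplement computed over $B_L$, then invoking an equivalent form of IED from Proposition~\ref{eqq2}. The only cosmetic difference is bookkeeping: the paper first reduces to the case $\tbigwedge a_i=0$ via Proposition~\ref{eqq1}\,\ref{eqq12} and applies Proposition~\ref{eqq2}\,\ref{eqq25} to the family $\{a_i^{*}\}$, whereas you keep the inequality form and apply Proposition~\ref{eqq2}\,\ref{eqq24} to $\{a_i^{**}\}$ --- the two computations are equivalent.
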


\begin{proof}
We only need to prove sufficiency. Let $L$ be a \donotbreakdash{$\bot$}scattered and IED frame and consider $\{a_i\}_{i\in I}\subseteq L$ such that ${\tbigwedge a_{i}=0}$. By \donotbreakdash{$\bot$}scatteredness one has $(\tbigvee a_{i}^*)^*=\tbigwedge a_i^{**}\le(\tbigwedge a_i)^{**}=0$ and hence Proposition~\ref{eqq2}\,\ref{eqq25} implies that $\tbigvee a_{i}^{*}={\tbigvee a_{i}^{***}=1}$. By Proposition~\ref{eqq1}\,\ref{eqq12} $L$ is IDM.
\end{proof}

\begin{remark}
 IED does not imply and neither is implied by \donotbreakdash{$\bot$}scatteredness. Indeed, the cofinite topology on an infinite set is IED and not \donotbreakdash{$\bot$}scattered, and the frame of scattered real numbers (i.e. the topology generated by the usual topology of the real line and by arbitrary subsets of the irrationals) is \donotbreakdash{$\bot$}scattered (cf. Proposition~\ref{propspatialscat} below) but not IED.
\end{remark}

Now, if we combine this characterization with Propositions~\ref{wedgep} and \ref{eqq2} we obtain:

\begin{corollary} A locale is IDM if and only if the nucleus $(-)^{**}\colon L\to L$ is a complete Heyting algebra homomorphism.
\end{corollary}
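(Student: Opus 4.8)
The plan is to unpack what it means for the \emph{self-map} $(-)^{**}\colon L\to L$ to be a complete Heyting algebra homomorphism and then recognize each ingredient among the conditions already isolated. A complete Heyting algebra homomorphism must preserve arbitrary joins, arbitrary meets (and hence the top and bottom elements), and the Heyting operation $\to$; all of these are to be computed inside $L$. So I would first reduce the corollary to the simultaneous verification of these three preservation properties for the nucleus.

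The decisive simplification is that one of the three comes for free. By Proposition~\ref{basic}\,\ref{basic2} one has $(a\to b)^{**}=a^{**}\to b^{**}$ for all $a,b\in L$, so the nucleus \emph{always} respects $\to$ (and, being a nucleus, it always preserves finite meets). Hence the third requirement is vacuous, and being a complete Heyting algebra homomorphism is equivalent to the conjunction of preservation of arbitrary joins and preservation of arbitrary meets. I would then match each of the two surviving requirements with a known condition: preservation of arbitrary meets by $(-)^{**}\colon L\to L$ is exactly \donotbreakdash{$\bot$}scatteredness (the nontrivial inequality being the displayed one, while the reverse inequality holds automatically by monotonicity of $(-)^{**}$), as recorded in Proposition~\ref{wedgep}\,\ref{wedgep2}; and preservation of arbitrary joins by the nucleus is exactly condition \eqref{IED}, as recorded in Proposition~\ref{eqq2}\,\ref{eqq22}. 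Thus the nucleus is a complete Heyting algebra homomorphism if and only if $L$ is both \donotbreakdash{$\bot$}scattered and IED.

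To finish, I would invoke Fact~\ref{fact3.10}, which states that $L$ is IDM precisely when it is \donotbreakdash{$\bot$}scattered and IED; chaining the equivalences yields the claim with no further computation. The one delicate point to pin down carefully — and the only real obstacle — is the precise reading of ``complete Heyting algebra homomorphism'' for a self-map of $L$: one must insist that joins, meets, and $\to$ are all evaluated inside $L$ rather than inside the Booleanization $B_L$. This matters because, as a map $L\to B_L$, the double-negation morphism preserves arbitrary joins automatically, whereas as a self-map of $L$ join-preservation is a genuine restriction, namely \eqref{IED}. Once this bookkeeping is fixed, the result is a direct concatenation of Propositions~\ref{wedgep} and \ref{eqq2} with Fact~\ref{fact3.10}.
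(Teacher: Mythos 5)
Your proof is correct and follows essentially the same route as the paper, which derives the corollary precisely by combining Propositions~\ref{wedgep} and \ref{eqq2} with Fact~\ref{fact3.10}. Your only addition is to make explicit the (correct) observation that preservation of the Heyting operation is automatic by Proposition~\ref{basic}\,\ref{basic2} --- a point the paper leaves implicit in its one-line derivation, having already noted it in the preliminaries.
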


Recall that the conjunction of weak subfitness and (\ref{IDM}) is equivalent to (CBA). One may also wonder which condition together with (\ref{IED}) implies Booleaness.

We first note that Fact~\ref{sbb} does not remain valid if we replace IDM by IED. For example, an infinite set with the cofinite topology is  a $T_1$-space (thus its associated locale is subfit) and, as we have observed before, it is also IED; though not, of course, discrete.

We recall that a locale $L$ is said to be \emph{semiregular} if every element is a join of regular elements (i.e. elements contained in $B_L$). A \emph{regular} locale is one in which $a=\tbigvee\{b\mid b^*\vee a=1\}$ for each $a\in L$. Every regular locale is semiregular and every zero-dimensional locale is regular (cf. \cite[III]{STONE}). 

A Boolean algebra is trivially zero-dimensional, and therefore it is regular (and semiregular). Moreover:

\begin{fact}\label{ori}
Let $L$ be a locale. Then $L$ is a complete Boolean algebra if and only if it is a semiregular IED locale.
\end{fact}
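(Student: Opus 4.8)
The plan is to dispatch necessity in a line and then reduce sufficiency to a single computation. For necessity, if $L$ is a complete Boolean algebra then every element is regular, so $B_L=L$ and $L$ is trivially semiregular; moreover the nucleus $(-)^{**}$ is the identity map, which certainly preserves arbitrary joins, so $L$ is IED by Proposition~\ref{eqq2}\,\ref{eqq22}.

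For sufficiency I would first recall the reformulation of Booleaness that makes the argument transparent: a locale is a complete Boolean algebra precisely when every element is regular, i.e. when $a=a^{**}$ for all $a\in L$, equivalently $B_L=L$. (This is Glivenko's observation that the regular elements always form a Boolean algebra, which is the whole of $L$ exactly when the double-negation nucleus is the identity.) Since the inequality $a\le a^{**}$ is automatic, the goal reduces to proving $a^{**}\le a$ for an arbitrary $a\in L$.

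The key step combines the two hypotheses. By semiregularity I would write $a=\tbigvee_i b_i$ with each $b_i\in B_L$, so that $b_i^{**}=b_i$. Because $L$ is IED, the nucleus $(-)^{**}$ preserves this join (Proposition~\ref{eqq2}\,\ref{eqq22}), and therefore
\[
a^{**}=\Bigl(\tbigvee_i b_i\Bigr)^{**}=\tbigvee_i b_i^{**}=\tbigvee_i b_i=a.
\]
Hence $a$ is regular, and since $a$ was arbitrary we conclude $B_L=L$, so $L$ is Boolean.

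I do not expect a genuine obstacle here: all of the content lies in aligning the two properties so that the join decomposition furnished by semiregularity is consumed by the join-preserving nucleus supplied by IED. The one point worth flagging is the passage through the characterization of Booleaness as $B_L=L$; once that is in hand, semiregularity and IED slot together and the computation above closes the argument immediately.
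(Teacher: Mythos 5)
Your proof is correct and follows essentially the same route as the paper: the paper's argument is precisely that semiregularity makes $B_L$ generate $L$ by joins while (IED) makes $B_L$ closed under joins (via Proposition~\ref{veep}), forcing $B_L=L$. Your computation $a^{**}=\bigl(\tbigvee b_i\bigr)^{**}=\tbigvee b_i^{**}=\tbigvee b_i=a$ is just this same idea written out explicitly at the level of the nucleus rather than via the subframe characterization.
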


\begin{proof} Semiregularity means that $B_L$ generates $L$ by joins. But (\ref{IED}) is equivalent to $B_L$ being closed under joins. Hence $B_L=L$.
\end{proof}

\begin{remark}\label{rem3.15}
Locales $L$ for which $\mathcal{S}(L)^{op}$ is extremally disconnected were characterized by Plewe in \cite{SLAT}. It is therefore  natural to ask what the properties IDM and IED mean for locales of the form $\mathcal{S}(L)^{op}$. However, since $\mathcal{S}(L)^{op}$ is always a zero-dimensional locale, the previous fact implies that IED and IDM are in this case equivalent to Booleaness.
\end{remark}

Finally, we provide a condition under which extremal disconnectedness implies (\ref{IED}), due to Tomasz Kubiak in a private communication.
If $L$ is a locale, let us recall that a family  $\{a_i\}_{i\in I}\subseteq L$ is said to be \emph{locally finite} if there is a cover $\{b_j\}_{j\in J}$ such that for each $j\in J$ one has that  $a_i\wedge b_j=0$ for all but finitely many $i\in I$. The cover $\{b_j\}$ is said to \emph{witness} local finiteness of the family $\{a_i\}$.

\begin{proposition}\label{locfinite}
Let $L$ be a locale whose Booleanization $B_L$ is a locally finite family. Then \textup{(\ref{edframebat})} implies \textup{(\ref{IED})}.
\end{proposition}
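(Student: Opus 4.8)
The plan is to verify condition~\ref{eqq24} of Proposition~\ref{eqq2}, i.e.\ to show that $(\tbigwedge_{i} a_i)^*\le\tbigvee_{i} a_i^*$ for every family $\{a_i\}_{i\in I}\subseteq B_L$; by that proposition this is equivalent to (\ref{IED}). Since both sides of the inequality depend only on the set of values $D=\{a_i\mid i\in I\}\subseteq B_L$, I would first replace the indexed family by the subset $D$ and aim to prove $(\tbigwedge_{d\in D} d)^*\le\tbigvee_{d\in D} d^*$. The point of passing to the underlying set is that $D$, being a subfamily of the locally finite family $B_L$, is again locally finite, and the very cover $\{b_j\}_{j\in J}$ witnessing local finiteness of $B_L$ also witnesses that of $D$. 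This is the only step that uses the hypothesis.

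The argument then splits on the cardinality of $D$. If $D$ is finite, the inequality is exactly the finite second De Morgan law, which holds since $L$ is extremally disconnected (Proposition~\ref{EDCAR}\,\ref{EDCAR1}). If $D$ is infinite, I would use frame distributivity and $\tbigvee_{j} b_j=1$ to write $(\tbigwedge_{d\in D} d)^*=\tbigvee_{j}\,((\tbigwedge_{d\in D} d)^*\wedge b_j)$, and then bound each summand. Fixing $j$, local finiteness ensures that $d\wedge b_j=0$ for all but finitely many $d\in D$; since $D$ is infinite there is at least one $d_0\in D$ with $d_0\wedge b_j=0$, that is $b_j\le d_0^*$. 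Hence $(\tbigwedge_{d\in D} d)^*\wedge b_j\le b_j\le d_0^*\le\tbigvee_{d\in D} d^*$, and joining over $j\in J$ gives $(\tbigwedge_{d\in D} d)^*\le\tbigvee_{d\in D} d^*$, as required.

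The conceptual content lies in recognizing that extremal disconnectedness is only called upon to settle the finite subsets of $B_L$: every infinite locally finite subfamily of a locale satisfies the second De Morgan inequality unconditionally, simply because each piece of a witnessing cover sits below the pseudocomplement of some omitted member. Accordingly, I do not expect a serious obstacle; the one point requiring care is the reduction itself, since local finiteness need not pass to arbitrary indexed families with repetitions, and it is precisely to avoid this that one works with the underlying set $D$ before invoking the witnessing cover.
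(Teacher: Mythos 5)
Your proof is correct, but it takes a genuinely different route from the paper's. The paper establishes \eqref{IED} directly for an arbitrary family $\{a_i\}_{i\in I}\subseteq L$: it first replaces $(\tbigvee a_i)^{**}$ by $(\tbigvee a_i^{**})^{**}$ via the first De Morgan law, localizes along the witnessing cover $\{b_j\}_{j\in J}$ using that $(-)^{**}$ preserves finite meets to get $b_j\wedge(\tbigvee a_i^{**})^{**}\le\bigl(\tbigvee(b_j\wedge a_i^{**})\bigr)^{**}$, cuts the inner join down to a finite one by local finiteness, and only then invokes extremal disconnectedness to distribute $(-)^{**}$ over that finite join. You instead verify the equivalent condition of Proposition~\ref{eqq2}\,\ref{eqq24}, pass to the underlying set $D$ of values (which correctly handles the repetition issue for indexed families --- a point the paper glosses over when choosing its finite sets $F_j\subseteq I$), and split by cardinality: for finite $D$, extremal disconnectedness gives the finite second De Morgan law, while for infinite $D$ you observe that local finiteness alone forces $\tbigvee_{d\in D}d^*=1$, since each $b_j$ lies below $d_0^*$ for some omitted $d_0\in D$. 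Your argument buys a sharper insight --- extremal disconnectedness is needed only for finite subfamilies of $B_L$, and every infinite locally finite subset of \emph{any} locale satisfies the second De Morgan identity unconditionally --- and it avoids all manipulation of double negation, at the cost of routing through the equivalence in Proposition~\ref{eqq2}. The paper's argument is uniform (no case distinction) and derives \eqref{IED} directly without appealing to that equivalence.
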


\begin{proof}
Assume that $L$ is ED and $B_L$ is locally finite. Let $\{a_i\}_{i\in I}\subseteq L$. By an application of the first De Morgan law, we see that $( \tbigvee{a_i}^{**})^{**}= (\tbigvee{a_i})^{**}$. Now, let $\{b_j\}_{j\in J}$ be the cover witnessing local finiteness of $B_L$. For each $j\in J$, one has $$b_j  \wedge \left( \tbigvee{a_i}\right)^{**}=b_j  \wedge \left( \tbigvee{a_i}^{**}\right)^{**}\leq b_{j}^{**}\wedge \left( \tbigvee{a_i}^{**}\right)^{**}= \left( \tbigvee (b_j\wedge a_{i}^{**})\right)^{**}.$$
Since $\{a_{i}^{**}\}_{i\in I}\subseteq B_L$, there exists a finite $F_j\subseteq I$ such that $b_j\wedge a_{i}^{**}=0$ for all $i\not\in F_j$ and so we have
\[b_j  \wedge \bigl( \tbigvee{a_i}\bigr)^{**} \leq \Bigl( \tbigvee_{i\in F_j} (b_j\wedge a_{i}^{**})    \Bigr)^{**} = \tbigvee_{i\in F_j} b_{j}^{**}\wedge a_{i}^{**}\]
where the last equality follows from extremal disconnectedness and the fact that it is a finite join. We thus  obtain  
\[b_j  \wedge ( \tbigvee{a_i})^{**}  \leq \tbigvee_{i\in F_j}  a_{i}^{**}\le \tbigvee a_{i}^{**},\]
and the conclusion follows by taking joins as $j\in J$.
\end{proof}

\begin{remark}
Since finiteness of a family obviously implies its local finiteness, it is clear that the assumption in the proposition above is weaker than semiregularity; hence Proposition~\ref{locfinite} can be seen as a generalization of Remark~\ref{remarks3.9}~(5).
\end{remark}

\section{Scatteredness} \label{sec4} Let us now digress a bit in order to understand \donotbreakdash{$\bot$}scatteredness better.

We first state some further equivalent formulations of this condition.

\begin{proposition}\label{morescattered} The following conditions are equivalent for a locale $L$\textup:
\begin{enumerate}[label={\rm(\roman*)}]
 \item \label{morescattered1} $L$ is \donotbreakdash{$\bot$}scattered\textup;
 \item \label{morescattered2} There exists an open \donotbreakdash{$\bot$}scattered dense sublocale\textup;
 \item \label{morescattered3} The Booleanization $B_L$ is an open sublocale\textup;
 \item \label{morescattered4} The interior of a dense sublocale is dense.
\end{enumerate}
\end{proposition}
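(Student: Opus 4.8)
The plan is to make (iii) — the assertion that $B_L$ is an open sublocale — the hub, proving the cycle (i)$\Rightarrow$(iii)$\Rightarrow$(ii)$\Rightarrow$(iii)$\Rightarrow$(i) together with (iii)$\Leftrightarrow$(iv). Two arrows are immediate. For (iii)$\Rightarrow$(i): if $B_L=\mathfrak{o}(a)$, then the nucleus of $B_L$, which is $(-)^{**}$, coincides with the open nucleus $a\to(-)$; since $a\to\tbigwedge b_i=\tbigwedge(a\to b_i)$ by property \ref{hp1}, $(-)^{**}$ preserves arbitrary meets, i.e. $L$ is \donotbreakdash{$\bot$}scattered. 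For (iii)$\Rightarrow$(ii): $B_L$ is always a dense sublocale, it is open by hypothesis, and as a complete Boolean algebra it is trivially \donotbreakdash{$\bot$}scattered (its double-negation nucleus is the identity), so $B_L$ itself witnesses (ii).

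The crux is (i)$\Rightarrow$(iii), and the whole argument hinges on guessing the element that opens $B_L$. I would set $d=\tbigwedge\{a\in L\mid a^*=0\}$, the meet of all dense elements, and claim $B_L=\mathfrak{o}(d)$. First, \donotbreakdash{$\bot$}scatteredness forces $d$ itself to be dense: since $a^*=0$ gives $a^{**}=1$, preservation of meets yields $d^{**}=\tbigwedge\{a^{**}\mid a^*=0\}=1$. It then remains to verify $x^{**}=d\to x$ for every $x\in L$, so that $(-)^{**}$ is exactly the open nucleus $d\to(-)$. For ``$\le$'' I would use that $x\vee x^*$ is always dense (its pseudocomplement is $x^*\wedge x^{**}=0$), hence $d\le x\vee x^*$, whence $d\wedge x^{**}\le(x\wedge x^{**})\vee(x^*\wedge x^{**})=x\vee 0=x$, i.e. $x^{**}\le d\to x$. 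For ``$\ge$'' I would invoke Proposition~\ref{basic}\,\ref{basic2} and the density of $d$: $d\to x\le(d\to x)^{**}=d^{**}\to x^{**}=1\to x^{**}=x^{**}$.

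For (ii)$\Rightarrow$(iii) I would transfer openness along a dense open sublocale. If $U=\mathfrak{o}(u)$ is an open \donotbreakdash{$\bot$}scattered dense sublocale, then, $U$ being dense, its Booleanization coincides with that of $L$, that is $B_U=B_L$. Applying the already-proved implication (i)$\Rightarrow$(iii) inside the locale $U$ shows $B_U$ is open in $U$; since an open sublocale of the open sublocale $U$ is open in $L$, we conclude that $B_L=B_U$ is open in $L$. Finally (iii)$\Leftrightarrow$(iv) rests on $B_L$ being the least dense sublocale: if $B_L$ is open and $S$ is dense, then $B_L\subseteq S$ gives $B_L=\mathrm{int}(B_L)\subseteq\mathrm{int}(S)$, and $0\in B_L$ then forces $0\in\mathrm{int}(S)$, so $\mathrm{int}(S)$ is dense; conversely, taking $S=B_L$ in (iv) makes $\mathrm{int}(B_L)$ dense, and as $\mathrm{int}(B_L)\subseteq B_L$ with $B_L$ least dense we get $\mathrm{int}(B_L)=B_L$, so $B_L$ equals its own interior and is open.

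The main obstacle is the choice of witness $d$ in (i)$\Rightarrow$(iii) and the verification that $x^{**}=d\to x$; once one settles on $d=\tbigwedge\{a\mid a^*=0\}$, the two inequalities follow respectively from the density of $x\vee x^*$ and from the behaviour of $(-)^{**}$ under the Heyting operation established in Proposition~\ref{basic}. The remaining delicate points — that openness is inherited from $U$ to $L$ and that $B_U=B_L$ for dense $U$ — are standard sublocale bookkeeping that I would treat as routine.
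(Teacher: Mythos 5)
Your proof is correct, and it takes a genuinely different route from the paper at the one point where something substantive has to happen. The paper proves the single cycle \ref{morescattered1}$\Rightarrow$\ref{morescattered2}$\Rightarrow$\ref{morescattered3}$\Rightarrow$\ref{morescattered4}$\Rightarrow$\ref{morescattered1}; there, the passage from \donotbreakdash{$\bot$}scatteredness to openness of the Booleanization (used inside \ref{morescattered2}$\Rightarrow$\ref{morescattered3}, and in converse form in \ref{morescattered4}$\Rightarrow$\ref{morescattered1}) is discharged by the characterization of open sublocales as those whose associated frame quotient is a complete Heyting algebra homomorphism, combined with Proposition~\ref{wedgep}\,\ref{wedgep4}; that open-map characterization is quoted as background, not re-proved. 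You instead prove \ref{morescattered1}$\Rightarrow$\ref{morescattered3} by bare hands: the witness $d=\tbigwedge\{a\in L\mid a^*=0\}$ is dense by meet preservation, $x^{**}\le d\to x$ follows from density of $x\vee x^*$ and frame distributivity, and $d\to x\le x^{**}$ from Proposition~\ref{basic}\,\ref{basic2}, so that $(-)^{**}$ is literally the open nucleus $d\to(-)$ and $B_L=\mathfrak{o}(d)$. This buys elementarity and self-containedness (only the listed Heyting rules and Proposition~\ref{basic} are needed), plus an explicit description of the opening element as the meet of all dense elements, which the paper never exhibits; what the paper's arrangement buys is brevity modulo the quoted machinery, together with the auxiliary fact $B_S=B_{\overline{S}}$ for arbitrary sublocales, which you compress into the remark that $B_U=B_L$ for dense $U$ (harmless, since the preliminaries characterize $B_L$ as the unique Boolean dense sublocale). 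Your remaining arrows --- \ref{morescattered3}$\Rightarrow$\ref{morescattered1} via Heyting rule \ref{hp1}, \ref{morescattered3}$\Rightarrow$\ref{morescattered2} since $B_L$ is Boolean, \ref{morescattered2}$\Rightarrow$\ref{morescattered3} by running your \ref{morescattered1}$\Rightarrow$\ref{morescattered3} inside the open dense sublocale and using that an open sublocale of an open sublocale is open, and \ref{morescattered3}$\Leftrightarrow$\ref{morescattered4} via leastness of $B_L$ --- agree in substance with the paper's.
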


\begin{proof} \ref{morescattered1} $\implies$\ref{morescattered2} is obvious.\\[2mm] 
\ref{morescattered2}$\implies$\ref{morescattered3}: If $S$ is any sublocale whatsoever we have $B_S=B_{\overline{S}}$. Indeed, since $S$ is dense in $\overline{S}$ and $B_{\overline S}$ is the least such sublocale of $\overline{S}$, one has $B_{\overline{S}}\subseteq S$. But now $B_{\overline{S}}$ is dense in $S$, whence it is a dense Boolean sublocale of $S$. Thus $B_S=B_{\overline{S}}$. If $S$ is dense and \donotbreakdash{$\bot$}scattered, one has that $B_S=B_L$ is open in $S$. If additionally $S$ is open, it follows that $B_L$ is open in $L$.\\[2mm]
\ref{morescattered3}$\implies$\ref{morescattered4}: If $S$ is a dense sublocale, we have $B_L \subseteq S$, and so by assumption, $B_L=\textrm{int}(B_L)\subseteq \textrm{int}(S)$. Hence $\textrm{int}(S)$ must be dense too.\\[2mm]
\ref{morescattered4}$\implies$\ref{morescattered1}: Since $B_L$  is dense, by assumption so is $\textrm{int}(B_L)$. But $B_L$ is the least dense sublocale, hence $B_L\subseteq \textrm{int}(B_L)$, i.e., $B_L$ is open. It follows that the frame homomorphism $(-)^{**}$ (left adjoint to the sublocale embedding $B_L\subseteq L$) is a complete Heyting algebra homomorphism, i.e. $L$ is \donotbreakdash{$\bot$}scattered.
\end{proof}

If we restrict to the spatial case, we obtain the following characterizations, already mentioned in \cite{BP,SCAT}:
\begin{proposition}\label{propspatialscat} The following conditions are equivalent for a $T_0$-space $X$\textup:
\begin{enumerate}[label={\rm(\roman*)}]
\item \label{propspatialscat1} $\Omega(X)$ is \donotbreakdash{$\bot$}scattered\textup;
\item \label{propspatialscat2} There exists an open, dense and discrete subset\textup;
\item \label{propspatialscat3} The set of isolated points is dense.
\end{enumerate}
\end{proposition}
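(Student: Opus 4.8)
The plan is to establish the cycle \ref{propspatialscat3}$\Rightarrow$\ref{propspatialscat2}$\Rightarrow$\ref{propspatialscat1}$\Rightarrow$\ref{propspatialscat3}, leaning on the localic characterizations of \donotbreakdash{$\bot$}scatteredness already proved in Proposition~\ref{morescattered} and on the standard dictionary between open subspaces of $X$ and open sublocales of $\Omega(X)$: for an open $U\subseteq X$ the open sublocale $\mathfrak{o}(U)$ is frame-isomorphic to the subspace frame $\Omega(U)$, and $\mathfrak{o}(U)$ is a dense sublocale exactly when $U$ is a dense subspace (since $U^*=0$ means $\mathfrak{o}(U)$ has least element $0$). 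The two easy implications are purely topological, and the weight of the argument falls on deriving an honest discrete subspace from the abstractly Boolean Booleanization.

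The equivalence \ref{propspatialscat2}$\Leftrightarrow$\ref{propspatialscat3} I would handle directly. For \ref{propspatialscat3}$\Rightarrow$\ref{propspatialscat2}, take $D$ to be the set of isolated points: it is a union of open singletons, hence open; each singleton is open in $D$, so $D$ is discrete; and $D$ is dense by hypothesis. Conversely, if $A$ is open, dense and discrete, then for each $a\in A$ the set $\{a\}$ is open in $A$, say $\{a\}=A\cap W$ with $W$ open in $X$; as $A$ itself is open, $\{a\}=A\cap W$ is open in $X$, so $a$ is an isolated point. Hence $A$ is contained in the set of isolated points, which is therefore dense.

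For \ref{propspatialscat2}$\Rightarrow$\ref{propspatialscat1}, let $A$ be open, dense and discrete. Then $\mathfrak{o}(A)\cong\Omega(A)=\mathcal{P}(A)$ is a complete Boolean algebra; since $a^{**}=a$ in any Boolean frame, the nucleus $(-)^{**}$ on it is the identity and thus trivially preserves arbitrary meets, so $\mathfrak{o}(A)$ is a \donotbreakdash{$\bot$}scattered sublocale. Moreover it is open, and dense because $A$ is a dense subspace. Thus $\Omega(X)$ possesses an open \donotbreakdash{$\bot$}scattered dense sublocale, and Proposition~\ref{morescattered}\,\ref{morescattered2} yields that $\Omega(X)$ is \donotbreakdash{$\bot$}scattered.

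The substantial step is \ref{propspatialscat1}$\Rightarrow$\ref{propspatialscat2}. By Proposition~\ref{morescattered}\,\ref{morescattered3}, \donotbreakdash{$\bot$}scatteredness tells us that $B_{\Omega(X)}$ is an open sublocale, say $B_{\Omega(X)}=\mathfrak{o}(U)$ for an open $U\subseteq X$; since the Booleanization is dense, $U$ is a dense subspace. On the other hand $B_{\Omega(X)}$ is Boolean, so under $\mathfrak{o}(U)\cong\Omega(U)$ the topology of the subspace $U$ is a (complete) Boolean algebra, i.e.\ every open subset of $U$ is clopen. The crux, and the step I expect to be the main obstacle, is the purely topological claim that a $T_0$-space whose topology is Boolean must be discrete; this is exactly where the separation axiom is indispensable, as an indiscrete two-point space has Boolean topology yet is not discrete. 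I would argue as follows: from ``every open is clopen'' one gets that $\Omega(U)$ is also closed under arbitrary intersections (the intersection of a family of opens is the complement of a union of opens, hence the complement of a clopen, hence open), so $U$ is an Alexandrov space and each point $x$ has a least open neighbourhood $U_x$; then, using $T_0$ together with clopenness, one rules out any second point of $U_x$ and concludes $U_x=\{x\}$, so $U$ is discrete. As $U$ is open and dense, this delivers \ref{propspatialscat2} and closes the cycle.
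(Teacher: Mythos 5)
Your proposal is correct and follows essentially the same route as the paper: both sides reduce the statement to Proposition~\ref{morescattered} via the standard dictionary between open (dense, discrete) subspaces of $X$ and open (dense, Boolean) sublocales of $\Omega(X)$, the key point being that a $T_0$-space with Boolean topology is discrete. The only real difference is that the paper cites that last fact without proof, whereas you supply a valid argument for it (every open is clopen, hence the topology is Alexandrov, and by $T_0$ the minimal open neighbourhoods are singletons).
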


\begin{proof}
\ref{propspatialscat1}$\implies$\ref{propspatialscat2}: If $B_{\Omega(X)}$ is open, it corresponds to an open dense subspace of $X$ whose frame of opens is Boolean. Now, this subspace with the induced topology is also $T_0$ and hence (by Booleaness of its frame of opens) it is discrete.\\[2mm]
\ref{propspatialscat2}$\implies$\ref{propspatialscat3}: If $D$ is open and discrete, it is contained in the set of isolated points. Hence, if $D$ is dense, so is the set of isolated points.\\[2mm]
\ref{propspatialscat3}$\implies$\ref{propspatialscat1}: The set of isolated points is trivially open and discrete. Hence, under the assumption, it is an open dense discrete subspace. It thus corresponds to an open dense and Boolean sublocale of $\Omega(X)$.
\end{proof}

The following results will be needed later on.

\begin{lemma}\label{scin}
The property of being \donotbreakdash{$\bot$}scattered is inherited  
\begin{enumerate}[label={\rm(\arabic*)}]
\item \label{scin1} by open sublocales\textup;
\item \label{scin2} by dense sublocales.
\end{enumerate}
\end{lemma}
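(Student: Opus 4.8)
The plan is to prove the two inheritance properties separately, using the characterization from Proposition~\ref{wedgep} together with the general identity $a \to s = \nu_S(a) \to s$ for sublocales, and the fact that $\bot$-scatteredness is equivalent to openness of the Booleanization (Proposition~\ref{morescattered}\,\ref{morescattered3}).

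\medskip

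For part \ref{scin2} (dense sublocales), I would argue as follows. Let $S$ be a dense sublocale of a $\bot$-scattered locale $L$. Since $B_L$ is the least dense sublocale, we have $B_L \subseteq S$, and by the argument already used in the proof of Proposition~\ref{morescattered} (where it is shown that $B_S = B_{\overline{S}}$ for \emph{any} sublocale $S$), density of $S$ gives $B_S = B_L$. Now $\bot$-scatteredness of $L$ says $B_L$ is open in $L$, i.e. $B_L = \mathfrak{o}(u)$ for some $u \in L$. The goal is to show $B_S = B_L$ is open \emph{in} $S$. The key point is that an open sublocale of $L$ intersected with $S$ is open in $S$: concretely, $\mathfrak{o}(u) \cap S$ is the open sublocale of $S$ determined by $\nu_S(u) \in S$. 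Since $B_L \subseteq S$, one has $B_L \cap S = B_L = B_S$, so $B_S$ is open in $S$, and by Proposition~\ref{morescattered}\,\ref{morescattered3} applied to $S$, the sublocale $S$ is $\bot$-scattered.

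\medskip

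For part \ref{scin1} (open sublocales), let $S = \mathfrak{o}(u)$ be an open sublocale of a $\bot$-scattered locale $L$. Here the cleanest route is to work directly with the double-negation nucleus and the characterization \ref{wedgep2}, computing pseudocomplements inside $\mathfrak{o}(u)$. An element of $\mathfrak{o}(u)$ has the form $u \to b$, and its pseudocomplement \emph{relative to} $\mathfrak{o}(u)$ is computed via the formula $a \to s = \nu_S(a) \to s$; since $\nu_{\mathfrak{o}(u)}(a) = u \to a$, the relative Heyting operation and hence the relative pseudocomplement can be expressed using the ambient operations of $L$. The strategy is then to show that the relative double-negation on $\mathfrak{o}(u)$ agrees (up to the isomorphism $\mathfrak{o}(u) \cong {\uparrow}u$ as a frame, or directly through the nucleus presentation) with the restriction of the ambient $(-)^{**}$, so that preservation of arbitrary meets by $(-)^{**}$ on $L$ descends to preservation of arbitrary meets by the relative double-negation on $\mathfrak{o}(u)$. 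Concretely one verifies, for a family $\{s_i\} \subseteq \mathfrak{o}(u)$, that the relative pseudocomplement of $\tbigwedge s_i$ equals the relative join of the relative pseudocomplements, by reducing each side to an expression in $L$ and invoking condition \ref{wedgep5} of Proposition~\ref{wedgep} for $L$.

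\medskip

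The main obstacle I anticipate is part \ref{scin1}: one must be careful that pseudocomplements and the double-negation nucleus computed \emph{inside} the sublocale $\mathfrak{o}(u)$ do not coincide on the nose with the ambient ones, so the bookkeeping relating $(-)^{*_S}$ to $(-)^*$ through the formula $a \to s = \nu_S(a) \to s$ is where the real work lies. An alternative that may sidestep this difficulty for \ref{scin1} is to use $B_S = B_L \cap S$ again: if $B_L = \mathfrak{o}(w)$ is open in $L$, then $B_S = B_{\mathfrak{o}(u)} = \mathfrak{o}(w) \cap \mathfrak{o}(u) = \mathfrak{o}(w \wedge u)$ as a sublocale, which is automatically open in $\mathfrak{o}(u)$ (an open sublocale of an open sublocale is open), whence $\bot$-scatteredness of $\mathfrak{o}(u)$ follows again from Proposition~\ref{morescattered}\,\ref{morescattered3}. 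This unifies both parts under the single principle that the Booleanization of a suitable sublocale is the trace of the ambient Booleanization, and I would favor presenting both cases this way.
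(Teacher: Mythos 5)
Your route is genuinely different from the paper's: the paper argues elementwise (for \ref{scin2}, pseudocomplements and meets computed in a dense sublocale coincide with those of $L$, so the defining inequality transfers verbatim; for \ref{scin1}, it passes to $\mathfrak{o}(u)\cong\newdownarrow u$ and uses the formulas $b^{*u}=b^{*}\wedge u$ and $(b^{*u})^{*u}=b^{**}\wedge u$ from \cite{PJ}), whereas you argue at the level of sublocales via the characterization of \donotbreakdash{$\bot$}scatteredness as openness of the Booleanization. Using Proposition~\ref{morescattered}\,\ref{morescattered3} is legitimate here, since it is proved before Lemma~\ref{scin} and independently of it. Your part \ref{scin2} is complete: $B_S=B_L$ for dense $S$, the trace formula $\mathfrak{o}(u)\cap S=\mathfrak{o}_S(\nu_S(u))$ follows from the identity $a\to s=\nu_S(a)\to s$ of Section~\ref{sec2} together with the fact that the Heyting operation of a sublocale is the restriction of that of $L$, and Proposition~\ref{morescattered} then applies to the locale $S$.

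The gap is in part \ref{scin1}: your favored argument rests on the identity $B_{\mathfrak{o}(u)}=B_L\cap\mathfrak{o}(u)$, which you assert but never prove, and this is precisely the nontrivial content of the open case. It is not an instance of a single general principle: $B_S=B_L\cap S$ fails for arbitrary sublocales --- for a closed sublocale $\mathfrak{c}(b)$ with $b$ not regular, $B_L\cap\mathfrak{c}(b)$ is not even dense in $\mathfrak{c}(b)$, since its bottom would have to be $b$ itself --- so the open case requires its own argument, just as the dense case did. Fortunately the identity is true and can be established with tools already in the paper: by your own trace formula applied to $S=B_L$, the sublocale $B_L\cap\mathfrak{o}(u)$ is the open sublocale of $B_L$ determined by $\nu_{B_L}(u)=u^{**}$, hence a complete Boolean algebra (an open sublocale of a Boolean locale is isomorphic to a principal ideal of it, which is again Boolean); and it is dense in $\mathfrak{o}(u)$ because $0_{\mathfrak{o}(u)}=u\to 0=u^{*}$ is a regular element of $L$ satisfying $u\to u^{*}=u^{*}$, so that $u^{*}\in B_L\cap\mathfrak{o}(u)$. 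Since the Booleanization is the unique Boolean dense sublocale (Section~\ref{sec2}), this forces $B_L\cap\mathfrak{o}(u)=B_{\mathfrak{o}(u)}$, after which your argument for \ref{scin1} goes through. Two smaller points in your abandoned first sketch: $\mathfrak{o}(u)$ is isomorphic as a frame to $\newdownarrow u$, not to $\newuparrow u$ (the latter is the closed sublocale $\mathfrak{c}(u)$); and the claim that the relative double negation of $\mathfrak{o}(u)$ is the restriction of the ambient $(-)^{**}$ is in fact correct, but it itself requires essentially the computation the paper imports from \cite{PJ}, so it does not come for free either.
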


\begin{proof}
\ref{scin1} Since $\mathfrak{o}(a)\cong \newdownarrow a$, it reduces to show that $\newdownarrow a$ is \donotbreakdash{$\bot$}scattered whenever $L$ is. Note that $\newdownarrow a\subset L$ is closed under arbitrary joins and arbitrary non-empty meets. It was shown in \cite[Remark~5.4\,(2)]{PJ} that the pseudocomplement $b^{*a}$ of an element $b$ in $\newdownarrow a$ is given by $b^{*a}=b^{*}\wedge a$ and that $(b^{*a})^{*a}=b^{**}\wedge a$. Combining both remarks it is clear that the assertion holds.\\[2mm]
\ref{scin2} Pseudocomplements in dense sublocales are the same as in the ambient locale; and  sublocales are always closed under meets. It is then obvious that being \donotbreakdash{$\bot$}scattered is inherited by dense sublocales.
\end{proof}

\begin{lemma}\label{scOp}
If $f\colon L\longrightarrow M$ is an open localic map and $L$ is \donotbreakdash{$\bot$}scattered, then so is $f[L]$.
\end{lemma}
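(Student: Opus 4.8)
The plan is to reduce the statement to the fact that \emph{open surjections} preserve $\bot$-scatteredness, and then to check that the corestriction of $f$ to its image is such a surjection. Recall that openness of the localic map $f\colon L\to M$ means that the associated frame homomorphism $f^*\colon M\to L$ admits a further left adjoint $f_!\colon L\to M$ (so that $f_!\dashv f^*\dashv f$) satisfying the Frobenius identity $f_!(x\wedge f^*(m))=f_!(x)\wedge m$. Writing $S=f[L]$ for the image sublocale and $\bar f\colon L\to S$ for the corestriction, one has $\bar f^*=f^*|_S$, and I claim that $\bar f_!=\nu_S\circ f_!$ is a left adjoint of $\bar f^*$ for which Frobenius holds again (this uses only that $\nu_S$ is a frame homomorphism fixing the elements of $S$). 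Thus $\bar f\colon L\to S$ is an open surjection, and it remains to show that open surjections transport $\bot$-scatteredness from domain to codomain.

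So let $e\colon L\to N$ be any open surjection, with $e_!\dashv e^*\dashv e$ and Frobenius. Two features of $e^*$ do the work. First, since $e^*$ has a left adjoint it preserves \emph{arbitrary} meets, and since $e$ is a surjection $e^*$ is injective, hence an order-embedding. Secondly, $e^*$ commutes with pseudocomplements: the inequality $e^*(y^*)\le(e^*y)^*$ is immediate from $e^*(y)\wedge e^*(y^*)=e^*(y\wedge y^*)=0$, while for the reverse one computes, using Frobenius and $e_!(0)=0$,
\[e_!\bigl((e^*y)^*\bigr)\wedge y = e_!\bigl((e^*y)^*\wedge e^*y\bigr)=e_!(0)=0,\]
so that $e_!((e^*y)^*)\le y^*$ and hence $(e^*y)^*\le e^*(y^*)$ by the adjunction $e_!\dashv e^*$. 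Applying this twice gives $e^*(y^{**})=(e^*y)^{**}$ for every $y\in N$ (the left-hand double-star being computed in $N$, the right-hand one in $L$).

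With these tools the conclusion is a short computation. Given a family $\{y_i\}_{i\in I}\subseteq N$, meet-preservation together with the last identity yields
\[e^*\bigl(\tbigwedge y_i^{**}\bigr)=\tbigwedge (e^*y_i)^{**}\le\bigl(\tbigwedge e^*y_i\bigr)^{**}=e^*\bigl((\tbigwedge y_i)^{**}\bigr),\]
where the middle inequality is exactly the $\bot$-scatteredness of $L$ applied to $\{e^*y_i\}_{i\in I}$. Since $e^*$ is an order-embedding, this forces $\tbigwedge y_i^{**}\le(\tbigwedge y_i)^{**}$ in $N$, i.e. $N$ is $\bot$-scattered. The only genuinely delicate point is the reduction step: one must verify that the image factor $\bar f$ is really \emph{open}, and not merely a surjection whose frame map has a left adjoint; this is precisely where the Frobenius identity for $f$ is used (equivalently, one may invoke the standard factorization of an open map as an open surjection followed by the inclusion of the open sublocale $f[L]=\mathfrak{o}(f_!(1))$, and then apply the open-surjection argument above).
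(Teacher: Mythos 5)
Your proof is correct, but after the common first step (reducing to a surjection onto the image) it takes a genuinely different route from the paper. The paper stays at the level of sublocales: it uses the characterization of $\bot$-scatteredness as openness of the Booleanization (Proposition~\ref{morescattered}), shows via ``inverse image commutes with closure'' that $f^{-1}[B_M]$ is dense, deduces $f[B_L]\subseteq B_M$ by adjunction, upgrades this to the equality $B_M=f[B_L]$ using surjectivity, and concludes that $B_M$ is open because open maps send open sublocales to open sublocales. You instead work directly with the defining inequality: using the adjoint triple $e_!\dashv e^*\dashv e$ and Frobenius, you show that for an open surjection the frame homomorphism $e^*$ is a meet-preserving order-embedding commuting with pseudocomplementation, and then transport $\tbigwedge y_i^{**}\le(\tbigwedge y_i)^{**}$ from $L$ to the codomain. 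All your steps check out: $\bar f_!=\nu_S\circ f_!$ is indeed left adjoint to $f^*|_S$ and inherits Frobenius because $\nu_S$ is a frame homomorphism fixing $S$ pointwise; the identity $e^*(y^*)=(e^*y)^*$ follows from Frobenius exactly as you compute; and injectivity together with preservation of binary meets makes $e^*$ an order-embedding. Your route is more elementary and self-contained --- it needs neither Proposition~\ref{morescattered} nor any facts about closures and direct images of sublocales --- and the key identity you establish (openness implies near-openness, i.e.\ $e^*$ commutes with $(-)^{**}$) is precisely the ingredient the paper itself invokes later when proving the analogous corollary for images of IED and IDM locales under open maps. What the paper's route buys instead is the sharper structural fact $B_M=f[B_L]$ for open surjections, which carries more information than the bare inequality and fits naturally into its sublocale-theoretic treatment of $\bot$-scatteredness.
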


\begin{proof} First, since a localic map is open if and only if both halves of its \donotbreakdash{surjection}embedding factorization are open, we can assume that $f$ is also a surjection. Now, openness of a localic morphism implies that inverse image commutes with closure. It follows that $L=f^{-1}[M]=f^{-1}\left[\overline{B_M}\right]=\overline{f^{-1}[B_M]}$,  and since $B_L$ is the least dense sublocale, one has $B_L\subseteq f^{-1}[B_M]$, which by adjunction is equivalent to $f[B_L]\subseteq B_M$.  But $f$ is surjective, so in particular $0=f(0)\in f[B_L]$, and thus $f[B_L]$ is dense in $M$. Hence we have the equality $B_M=f[B_L]$. If $L$ is \donotbreakdash{$\bot$}scattered, $B_L$ is open and thus so is $B_M$ by openness of $f$.
\end{proof}

\begin{remark}
One may consider the notion of $\bot$-scatteredness when restricted to locales of the form $\mathcal{S}(L)^{op}$ (cf. Remark \ref{rem3.15} above). This will be carried out in a further work concerning sublocale lattices. 
\end{remark}

\section{Properties of IDM and IED locales.}\label{sec5}
Let us come back to the main topic of the paper. We now state some further equivalent formulations of these properties in terms of the Booleanization:

\begin{proposition} \label{veep} The following conditions are equivalent for a locale $L$\textup:
\begin{enumerate}[label={\rm(\roman*)}]
\item \label{veep1} $L$ is an IED locale\textup;
\item \label{veep2} The Booleanization $B_L$ is a subframe of $L$.
\end{enumerate}
\end{proposition}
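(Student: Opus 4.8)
The plan is to reduce the subframe condition \ref{veep2} to closure under arbitrary joins and then to recognize that exactly this closure is a reformulation of (\ref{IED}).

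First I would recall that $B_L$, being a sublocale of $L$, is automatically closed under arbitrary meets computed in $L$ (property \ref{subp1}), and in particular under finite meets. Since a subframe is by definition a subset closed under arbitrary joins and finite meets, the only genuine content of \ref{veep2} is that $B_L$ be closed under the joins of $L$; that is, that $\tbigvee a_i\in B_L$ whenever $\{a_i\}_{i\in I}\subseteq B_L$. The subtle point to state carefully here is that the join of a family of regular elements \emph{inside} the frame $B_L$ is not their $L$-join but rather $(\tbigvee a_i)^{**}$ (recall that the nucleus $(-)^{**}$ maps $L$ onto $B_L$). Thus \ref{veep2} amounts precisely to the demand that $(\tbigvee a_i)^{**}=\tbigvee a_i$ for every family $\{a_i\}_{i\in I}\subseteq B_L$.

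For \ref{veep1}$\implies$\ref{veep2} I would argue directly: if $L$ is IED, then $(\tbigvee a_i)^{**}=\tbigvee a_i^{**}$ for every family, so specializing to $\{a_i\}_{i\in I}\subseteq B_L$ (where $a_i^{**}=a_i$) gives $(\tbigvee a_i)^{**}=\tbigvee a_i$, whence $\tbigvee a_i\in B_L$. Together with the automatic meet-closure this exhibits $B_L$ as a subframe. For the converse \ref{veep2}$\implies$\ref{veep1} I would apply join-closure to the family of double pseudocomplements: given an arbitrary $\{a_i\}_{i\in I}\subseteq L$, the family $\{a_i^{**}\}_{i\in I}$ lies in $B_L$, so by hypothesis $(\tbigvee a_i^{**})^{**}=\tbigvee a_i^{**}$. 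Since $a_i\le a_i^{**}$ forces $(\tbigvee a_i)^{**}\le(\tbigvee a_i^{**})^{**}$, combining the two yields $(\tbigvee a_i)^{**}\le\tbigvee a_i^{**}$, which is exactly (\ref{IED}).

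The proof is short, and the only real obstacle is conceptual rather than computational: one must not conflate the internal join of the frame $B_L$ with the ambient join of $L$, and the whole equivalence hinges on observing that these two joins coincide precisely when $L$ is IED.
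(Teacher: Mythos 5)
Your proof is correct and takes essentially the same approach as the paper: the paper's proof is a one-line appeal to the general fact that a nucleus preserves arbitrary joins if and only if its associated sublocale is closed under arbitrary joins (combined with the characterization of IED as join-preservation of $(-)^{**}$), and your argument is exactly that fact, proved directly for the double-negation nucleus. The point you isolate as the only subtlety --- not conflating the internal joins of $B_L$ with the ambient joins of $L$ --- is precisely what the paper's cited fact encapsulates, so nothing is missing.
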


\begin{proof} The result follows immediately from Proposition~\ref{eqq2} and the fact that a nucleus preserves arbitrary joins if and only if its associated sublocale is closed under arbitrary joins.
\end{proof}

\begin{proposition}\label{idmcar}
The following conditions are equivalent for a locale $L$\textup:
\begin{enumerate}[label={\rm(\roman*)}]
\item \label{idmcar1} $L$ is an IDM locale\textup;
\item \label{idmcar2} The Booleanization $B_L$ is an open sublocale and a subframe of $L$\textup;
\item \label{idmcar3} The Booleanization $B_L$ is open and a complete sublattice of $L$.
\end{enumerate}
\end{proposition}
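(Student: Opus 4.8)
The plan is to assemble the equivalence \ref{idmcar1} $\iff$ \ref{idmcar2} from results already at hand, and then to dispatch \ref{idmcar2} $\iff$ \ref{idmcar3} by a structural observation about $B_L$ as a sublocale.

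For \ref{idmcar1} $\iff$ \ref{idmcar2} I will invoke three earlier results in succession. By Fact~\ref{fact3.10}, a locale $L$ is IDM exactly when it is simultaneously $\bot$-scattered and IED. I then translate each of these two conditions into a statement about $B_L$: by Proposition~\ref{morescattered} (equivalence \ref{morescattered1} $\iff$ \ref{morescattered3}), $\bot$-scatteredness amounts to $B_L$ being an open sublocale; and by Proposition~\ref{veep}, being IED amounts to $B_L$ being a subframe of $L$. Conjoining these two translations yields precisely condition \ref{idmcar2}, so \ref{idmcar1} and \ref{idmcar2} are equivalent.

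For \ref{idmcar2} $\iff$ \ref{idmcar3} the decisive point is that $B_L$ is a sublocale, hence by the sublocale axiom \ref{subp1} it is always closed under arbitrary meets computed in $L$; in particular the meet that $B_L$ inherits agrees with the meet of $L$. Because of this, for the specific subset $B_L$ the notions of subframe and of complete sublattice collapse onto one another. Indeed, a complete sublattice is closed under arbitrary meets and arbitrary joins, so it is \emph{a fortiori} a subframe; conversely a subframe is closed under arbitrary joins and finite meets, and since $B_L$ is moreover automatically closed under all meets, such a subframe is already closed under arbitrary meets and joins, i.e. a complete sublattice. As the openness clause occurs identically in \ref{idmcar2} and \ref{idmcar3}, the two conditions are equivalent.

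There is essentially no obstacle here beyond bookkeeping; the only point requiring care is the observation underlying \ref{idmcar2} $\iff$ \ref{idmcar3}, namely that closure of $B_L$ under arbitrary (and not merely finite) meets is automatic from its being a sublocale, so that the extra closure demanded by ``complete sublattice'' over ``subframe'' costs nothing.
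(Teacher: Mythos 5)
Your proposal is correct and follows essentially the same route as the paper: the paper's proof is exactly the combination of Fact~\ref{fact3.10} with Proposition~\ref{morescattered} (openness of $B_L$ $\iff$ $\bot$-scatteredness) and Proposition~\ref{veep} ($B_L$ subframe $\iff$ IED), which the paper dismisses as immediate. Your extra care on \ref{idmcar2}~$\iff$~\ref{idmcar3} --- noting that $B_L$, being a sublocale, is automatically closed under arbitrary meets of $L$, so ``subframe'' and ``complete sublattice'' coincide for this particular subset --- is a correct spelling-out of what the paper leaves implicit.
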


\begin{proof}
Immediate from Propositions~\ref{morescattered} and \ref{idmcar} and Fact~\ref{fact3.10}.
\end{proof}

It is a well-known fact that extremal disconnectedness is preserved under taking closed or dense sublocales \cite{HERPROP} and taking images  under open localic morphisms \cite{PARAL}. Next results generalizes those facts to the infinite setting.

\begin{proposition} \label{herr} Both properties IED and IDM are inherited
\begin{enumerate}[label={\rm(\arabic*)}]
\item \label{herr1} by open sublocales\textup;
\item \label{herr2} by dense sublocales.
\end{enumerate}
\end{proposition}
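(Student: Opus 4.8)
The plan is to reduce the whole statement to the IED case. By Fact~\ref{fact3.10}, a locale is IDM precisely when it is simultaneously \donotbreakdash{$\bot$}scattered and IED, and Lemma~\ref{scin} already records that \donotbreakdash{$\bot$}scatteredness is inherited by open and by dense sublocales. Hence, once I show that (\ref{IED}) alone passes to open and to dense sublocales, the IDM assertions follow by intersecting the two inheritance results. So from now on I assume $L$ is IED and treat the two kinds of sublocale separately.

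For the open case I would represent an open sublocale as $\mathfrak{o}(a)\cong\newdownarrow a$ and reuse the formulas recalled in the proof of Lemma~\ref{scin}\,\ref{scin1}: for $b\le a$ the pseudocomplement computed in $\newdownarrow a$ is $b^{*a}=b^{*}\wedge a$, whence $(b^{*a})^{*a}=b^{**}\wedge a$. The point that makes the open case painless is that $\newdownarrow a$ is closed under arbitrary joins in $L$, so joins in the sublocale agree with joins in $L$. Thus for $\{b_i\}_{i\in I}\subseteq\newdownarrow a$, using the infinite distributive law and (\ref{IED}) for $L$,
\[
(\tbigvee b_i)^{*a*a}=(\tbigvee b_i)^{**}\wedge a\le\bigl(\tbigvee b_i^{**}\bigr)\wedge a=\tbigvee (b_i^{**}\wedge a)=\tbigvee b_i^{*a*a},
\]
which is exactly (\ref{IED}) for $\newdownarrow a$.

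For the dense case the cleanest route is through the Booleanization and Proposition~\ref{veep}. If $S$ is dense in $L$ then $\overline S=L$, so the identity $B_S=B_{\overline S}$ established inside the proof of Proposition~\ref{morescattered} gives $B_S=B_L$; moreover $B_L\subseteq S$, since $B_L$ is the least dense sublocale. By Proposition~\ref{veep} it then suffices to verify that $B_S=B_L$ is a subframe of $S$. Finite meets in $S$ coincide with those in $L$, and $B_L$ is always closed under finite meets, so that half is immediate. For joins, take $\{r_i\}_{i\in I}\subseteq B_L$; since $L$ is IED, Proposition~\ref{veep} makes $B_L$ a subframe of $L$, so the join $\tbigvee r_i$ computed in $L$ lies in $B_L\subseteq S$. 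As $\nu_S$ fixes the elements of $S$, the join of $\{r_i\}$ computed in $S$, namely $\nu_S(\tbigvee r_i)$, equals $\tbigvee r_i$ and hence belongs to $B_L=B_S$. Thus $B_S$ is a subframe of $S$, and $S$ is IED.

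The single genuine subtlety, and the step I would handle most carefully, is that joins in a sublocale need not agree with joins in the ambient locale. For open sublocales the difficulty simply disappears, because $\newdownarrow a$ is join-closed. For dense sublocales it is precisely the containment $B_L\subseteq S$ that rescues the argument: the joins one must control are joins of \emph{regular} elements, which by (\ref{IED}) for $L$ already land inside $B_L$ and are therefore left untouched by $\nu_S$. (Alternatively, one can argue by direct computation, using that pseudocomplements in a dense sublocale agree with those in $L$, so that $(\tbigvee_S s_i)^{*S*S}=(\tbigvee_L s_i)^{**}$, and then invoke that $\nu_S$ is inflationary together with (\ref{IED}) for $L$.)
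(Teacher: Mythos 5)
Your proof is correct, and no step fails. The overall decomposition matches the paper exactly: IDM is reduced to IED via Fact~\ref{fact3.10} and Lemma~\ref{scin}, and the open case is handled, as in the paper, through $\mathfrak{o}(a)\cong\newdownarrow a$, the formulas $b^{*a}=b^*\wedge a$ and $(b^{*a})^{*a}=b^{**}\wedge a$, and join-closedness of $\newdownarrow a$ in $L$ (the paper only says this ``can be proved as in Lemma~\ref{scin}''; your display is precisely that argument). The dense case is where you take a genuinely different route. The paper verifies the (\ref{IED}) equation directly for an arbitrary family $\{s_i\}_{i\in I}\subseteq S$: by the first De Morgan law and the coincidence of meets and pseudocomplements in a dense sublocale, $(\tbigvee s_i)^*=(\tbigsqcup s_i)^*$; then $B_L\subseteq S$ and (\ref{IED}) in $L$ give $\tbigvee s_i^{**}=(\tbigvee s_i)^{**}\in S$, so the join of the $s_i^{**}$ in $S$ equals the one in $L$, and these equalities chain into $\tbigsqcup s_i^{**}=(\tbigsqcup s_i)^{**}$. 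You instead apply the subframe characterization of Proposition~\ref{veep} (which precedes this proposition in the paper, so there is no circularity): density gives $B_S=B_L\subseteq S$, the $L$-join of any family of regular elements stays in $B_L$ by (\ref{IED}) for $L$, and since that join already lies in $S$ it is fixed by $\nu_S$, whence $B_S$ is a subframe of $S$. Both arguments exploit the same mechanism --- density puts $B_L$ inside $S$, and the relevant $L$-joins never leave $S$ --- but yours is shorter and more structural, needing only joins of regular elements, while the paper's computation is more self-contained and directly exhibits the IED identity $(\tbigsqcup s_i)^{**}=\tbigsqcup s_i^{**}$ for arbitrary families of $S$, not just regular ones.
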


\begin{proof} Clearly, the assertion for IDM locales  will follow from the one for IED locales combined with Lemma~\ref{scin} and Fact~\ref{fact3.10}. Now, that IED is inherited by open sublocales can be proved as in Lemma~\ref{scin}\,\ref{scin1}. Let us finally show that IED is hereditary with respect to dense sublocales. Let $S$ be a dense sublocale of an IED locale $L$ and denote by $\tbigsqcup$ joins in $S$. Note that in any dense sublocale one has $(\tbigvee s_i)^* =( \tbigsqcup s_i)^*$ for each  $\{s_i\}_{i\in I}\subseteq S$. Indeed, by the first de Morgan law in $L$ (resp. in $S$) and the fact that pseudocomplements and meets are the same in $S$ and $L$, both sides are equal to $\tbigwedge s_{i}^*$.
Since $S$ is dense, we have that $B_L\subseteq S$, and therefore by the (\ref{IED}) law in $L$, one has $\tbigvee s_{i}^{**}=(\tbigvee s_i)^{**}\in S$. Thus the join of $\{s_{i}^{**}\}_{i\in I}\subseteq S$ in $S$ coincides with the one in $L$. It  follows that $\tbigsqcup s_{i}^{**} =\tbigvee s_{i}^{**}=( \tbigvee s_{i})^{**}=( \tbigsqcup s_{i})^{**}.$
\end{proof}

We now have the following trivial observation:

\begin{lemma}
Let $M$ be a subframe of $L$ and assume that $M$ is closed under pseudocomplementation in $L$. If $L$ is IED, then so is $M$.
\end{lemma}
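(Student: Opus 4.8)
The plan is to reduce everything to the fact that, under the stated hypotheses, the pseudocomplement computed inside $M$ agrees with the one computed in $L$. First I would record the structural consequences of $M$ being a subframe: arbitrary joins and the bottom element $0$ are computed in $M$ exactly as in $L$, and so are finite meets. In particular, for $a\in M$ the pseudocomplement $a^{*_M}=\tbigvee_M\{x\in M\mid x\wedge a=0\}$ is a join that is also taken in $L$, so every $x$ in this set lies below $a^*$ and hence $a^{*_M}\le a^*$. Here the hypothesis that $M$ is closed under pseudocomplementation enters: since $a^*\in M$ and $a\wedge a^*=0$, the element $a^*$ itself belongs to the indexing set, giving the reverse inequality. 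Thus $a^{*_M}=a^*$ for all $a\in M$, and iterating (again using $a^*\in M$) yields $a^{**_M}=a^{**}$.

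Having identified the double pseudocomplement in $M$ with that in $L$, the (\ref{IED}) condition for $M$ becomes a direct translation. Given $\{a_i\}_{i\in I}\subseteq M$, I would compute
\[ \Bigl(\tbigvee_M a_i\Bigr)^{**_M}=\Bigl(\tbigvee a_i\Bigr)^{**}=\tbigvee a_i^{**}=\tbigvee_M a_i^{**_M}, \]
where the first equality uses that joins and double pseudocomplements agree between $M$ and $L$, the middle equality is precisely (\ref{IED}) for $L$, and the last again uses that each $a_i^{**_M}=a_i^{**}$ lies in $M$, so that the join in $L$ coincides with the join in $M$. This is exactly (\ref{IED}) for $M$ (with equality), so $M$ is IED.

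Alternatively, and perhaps more conceptually, I could route the argument through Proposition~\ref{veep}: since $a^{**_M}=a^{**}$, one has $B_M=M\cap B_L$. Because $L$ is IED, $B_L$ is a subframe of $L$, and in any locale $B_L$ is closed under finite meets (as $(-)^{**}$ preserves them); intersecting with the subframe $M$ then shows $B_M=M\cap B_L$ is closed under finite meets and arbitrary joins in $M$, i.e.\ $B_M$ is a subframe of $M$. Proposition~\ref{veep} then gives that $M$ is IED.

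I do not expect a genuine obstacle here --- this is why the statement is flagged as a trivial observation --- but the one point that demands care is the coincidence of the two pseudocomplementations. Without the closure hypothesis one only obtains $a^{*_M}\le a^*$, which is insufficient; and one must keep in mind that, although finite meets and arbitrary joins in a subframe agree with those in the ambient locale, arbitrary meets in general do not. Fortuitously the (\ref{IED}) inequality and the subframe conditions involve only joins and finite meets, so this discrepancy never interferes.
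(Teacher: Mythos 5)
Your proposal is correct. The paper states this lemma without proof, presenting it as a trivial observation, and your argument is exactly the intended one: since $M$ is closed under the arbitrary joins of $L$ and under pseudocomplementation, pseudocomplements (hence double pseudocomplements) computed in $M$ coincide with those computed in $L$, so the (\ref{IED}) identity transfers verbatim from $L$ to $M$; your alternative route through Proposition~\ref{veep} is equally valid and needs no further comment.
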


\begin{remark}\label{nearlyopen}
 A frame homomorphism $h$ is said to be \emph{nearly open} \cite{BP} if it commutes with pseudocomplementation (equivalently, if it commutes with double pseudocomplementation).  The previous lemma can therefore be stated as: 
\begin{quote}{\em If $L$ is IED and $M\hookrightarrow L$ is a nearly open subframe embedding, then $M$ is also IED.}
\end{quote}
\end{remark}
\pagebreak
\begin{corollary}
If $f\colon L\longrightarrow M$ is an open localic map and $L$ is IED \textup(resp. IDM\textup), then so is $f[L]$.
\end{corollary}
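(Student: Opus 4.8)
The plan is to reduce everything to a single statement about open surjections. For the IDM case, recall from Fact~\ref{fact3.10} that a locale is IDM exactly when it is simultaneously \donotbreakdash{$\bot$}scattered and IED; since Lemma~\ref{scOp} already guarantees that $f[L]$ inherits \donotbreakdash{$\bot$}scatteredness from $L$, the IDM case will follow at once from the IED case. So I concentrate on showing that $f[L]$ is IED whenever $L$ is. Exactly as at the start of the proof of Lemma~\ref{scOp}, I would use that a localic map is open if and only if both halves of its surjection--embedding factorization are open; replacing $M$ by the sublocale $f[L]$ and corestricting, I may therefore assume that $f\colon L\to M$ is an \emph{open surjection} with $M=f[L]$, so that the task becomes to prove that $M$ is IED.

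The next step is to translate openness and surjectivity into properties of the frame homomorphism $h\colon M\to L$ whose right adjoint is $f$. Surjectivity of the localic map $f$ is equivalent to $h$ being one-to-one, so $h$ restricts to an isomorphism of $M$ onto its image $h[M]$, which, being the image of a frame homomorphism, is a subframe of $L$. Openness of $f$ means that $h$ is \emph{nearly open} in the sense of Remark~\ref{nearlyopen}, i.e.\ it commutes with pseudocomplementation; the implication ``open $\Rightarrow$ nearly open'' is standard (\cite{BP}) and follows in one line from Frobenius reciprocity. Consequently $h[M]$ is closed under pseudocomplementation in $L$: if $x=h(b)$ then $x^*=h(b)^*=h(b^*)\in h[M]$. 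Thus $h[M]$ is a subframe of the IED locale $L$ that is closed under pseudocomplementation, so the lemma preceding Remark~\ref{nearlyopen} applies and yields that $h[M]$ is IED. Since $h[M]\cong M=f[L]$ and IED is invariant under isomorphism, $f[L]$ is IED, and the IDM case follows as explained above.

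The crux is therefore the dictionary between $f$ and $h$: that an open \emph{surjective} localic map corresponds to a one-to-one, nearly open frame homomorphism whose image is a pseudocomplement-closed subframe isomorphic to $f[L]$. The only ingredient that is not purely formal is the implication open $\Rightarrow$ nearly open, and one should also check that, once $h[M]$ is closed under pseudocomplementation in $L$, the pseudocomplement computed inside the subframe $h[M]$ agrees with the one computed in $L$, so that the hypotheses of the quoted lemma are genuinely met.
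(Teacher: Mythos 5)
Your proof is correct and follows essentially the same route as the paper's: reduce the IDM case to the IED case via Fact~\ref{fact3.10} and Lemma~\ref{scOp}, pass to an open surjection via the surjection--embedding factorization, and then combine the lemma on pseudocomplement-closed subframe embeddings with the fact that open frame homomorphisms are nearly open. The final verification you flag (that pseudocomplements computed in $h[M]$ agree with those computed in $L$) is the routine one-line check implicit in the paper's phrase that $f^*$ is ``(isomorphic to) an open subframe inclusion,'' so there is no gap.
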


\begin{proof}
In view of Lemma~\ref{scOp} and Fact~\ref{fact3.10}, it suffices to show the assertion for IED. Moreover, since a localic map is open if and only if both halves of its surjection-embedding factorization are open, one can assume without loss of generality that $f$ is surjective. Now, the left adjoint $f^*$ of $f$ is (isomorphic to) an open subframe inclusion, so the result follows from the previous lemma and the fact that an open frame homomorphism is nearly open.
\end{proof}

\section{Hereditary variants}\label{sec6}
Given a property $P$ of locales, a locale $L$ is said to be \emph{hereditarily} $P$ if each sublocale of $L$   satisfies $P$. Our main interest in this section is to study hereditarily IDM and hereditarily IED locales.

We first note the following:

\begin{proposition}
 Let $P$ be a property of locales such that each dense sublocale of a locale satisfying $P$ also satisfies $P$. Then a locale $L$ is hereditarily $P$ if and only if each closed sublocale of $L$ satisfies $P$.
\end{proposition}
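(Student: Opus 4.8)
The plan is to prove the two implications separately, the forward one being immediate: if $L$ is hereditarily $P$, then in particular every closed sublocale satisfies $P$, since closed sublocales are sublocales. So the content lies entirely in the converse, and I would spend all the effort there.

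For sufficiency, assume every closed sublocale of $L$ satisfies $P$, and let $S$ be an \emph{arbitrary} sublocale of $L$; the goal is to show that $S$ satisfies $P$. The key idea is to exhibit $S$ as a dense sublocale of its own closure and then feed this into the hypothesis on $P$. Recall from Section~\ref{sec2} that the closure of $S$ is the closed sublocale $\overline{S}=\mathfrak{c}(\tbigwedge S)=\newuparrow(\tbigwedge S)$ of $L$. By hypothesis $\overline{S}$, being closed, satisfies $P$. It remains to observe that $S$ is a dense sublocale of $\overline{S}$: indeed, $S$ is a sublocale of $\overline{S}$ (a sublocale of a sublocale is again a sublocale), the bottom element of $\overline{S}$ is precisely $\tbigwedge S$, and since sublocales are closed under arbitrary meets one has $\tbigwedge S\in S$. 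Thus $S$ contains the bottom element of the ambient locale $\overline{S}$, which by the density criterion recalled in Section~\ref{sec2} means exactly that $S$ is dense in $\overline{S}$.

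With this in hand I would conclude by applying the standing assumption on $P$ — that a dense sublocale of a locale satisfying $P$ again satisfies $P$ — to the locale $\overline{S}$ (which satisfies $P$) and its dense sublocale $S$. This yields that $S$ satisfies $P$, and since $S$ was an arbitrary sublocale of $L$, the locale $L$ is hereditarily $P$, as required.

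I do not expect any genuine obstacle: the argument is essentially a reduction of an arbitrary sublocale to the closed case via the density of $S$ in $\overline{S}$. The only step deserving a moment's care is the verification that $S$ is dense in $\overline{S}$, which rests on the twin facts that $\tbigwedge S$ is the bottom element of $\overline{S}$ and that $S$, being a sublocale, is closed under arbitrary meets and hence contains $\tbigwedge S$.
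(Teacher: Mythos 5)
Your proof is correct and follows essentially the same route as the paper's: reduce an arbitrary sublocale $S$ to the closed case via its closure $\overline{S}$, then apply dense-heredity of $P$ to $S\subseteq\overline{S}$. The only difference is that you spell out the verification that $S$ is dense in $\overline{S}$ (via $\tbigwedge S\in S$ being the bottom of $\overline{S}$), which the paper takes as known.
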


\begin{proof} We only need to prove sufficiency. Let  $L$ be a locale such that each closed sublocale of $L$ satisfies $P$ and let $S$ be an arbitrary sublocale of $L$. Then $\overline{S}$ is closed and so it has property $P$. Now $S$ is dense in $\overline{S}$ and so by dense-heredity of $P$, it follows that $S$  also has property $P$.
\end{proof}

From Lemma~\ref{scin} and Proposition~\ref{herr} we get then the following (note that scatteredness is precisely hereditary \donotbreakdash{$\bot$}scatteredness --- cf. \cite{HORD}):

\begin{corollary}\label{closedhereditary}
 Let $L$ be a locale. Then:
\begin{enumerate}[label={\rm(\arabic*)}]
\item \label{closedhereditary1} $L$ is scattered if and only if each closed sublocale of $L$ is \donotbreakdash{$\bot$}scattered.
\item \label{closedhereditary2} $L$ is hereditarily extremally disconnected if and only if each closed sublocale of $L$ is extremally disconnected.
\item \label{closedhereditary3} $L$ is hereditarily IDM if and only if each closed sublocale of $L$ is IDM.
\item \label{closedhereditary4} $L$ is hereditarily IED if and only if each closed sublocale of $L$ is IED.
\end{enumerate}
\end{corollary}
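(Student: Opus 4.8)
The plan is to apply the preceding proposition four separate times, once for each of the four properties, the only input required in each case being that the property in question is inherited by dense sublocales; this is exactly the hypothesis of that proposition, and everything else is a formal substitution.

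For part \ref{closedhereditary1}, I would take $P$ to be $\bot$-scatteredness. By definition (cf.\ \cite{HORD}) scatteredness \emph{is} hereditary $\bot$-scatteredness, so the proposition, read for this $P$, says precisely that $L$ is scattered if and only if every closed sublocale is $\bot$-scattered. The dense-heredity hypothesis is supplied by Lemma~\ref{scin}\,\ref{scin2}.

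For part \ref{closedhereditary2}, I would take $P$ to be extremal disconnectedness. The required dense-heredity is the classical fact, recalled just before Proposition~\ref{herr} (see \cite{HERPROP}), that \textup{(\ref{edframebat})} is preserved under passage to dense sublocales; the proposition then yields the equivalence directly. For parts \ref{closedhereditary3} and \ref{closedhereditary4}, I would take $P$ to be IDM and IED respectively, the dense-heredity hypothesis being furnished in both cases by Proposition~\ref{herr}\,\ref{herr2}.

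Since each of the four invocations is a purely formal instance of an already-established general statement, there is no real obstacle here: all of the genuine content resides in the dense-heredity results (Lemma~\ref{scin}, Proposition~\ref{herr}, and the cited fact about \textup{(\ref{edframebat})}), which are available by hypothesis. The only point that requires a moment's attention is the bookkeeping in part \ref{closedhereditary1}, where one must keep the definitional identification of scatteredness with hereditary $\bot$-scatteredness in mind in order to phrase the conclusion in the stated form.
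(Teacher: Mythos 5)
Your proposal is correct and is exactly the paper's argument: the paper derives the corollary by applying the preceding proposition (hereditary $P$ $\iff$ all closed sublocales satisfy $P$, for dense-hereditary $P$) with the dense-heredity inputs coming from Lemma~\ref{scin} for $\bot$-scatteredness, the recalled fact from \cite{HERPROP} for extremal disconnectedness, and Proposition~\ref{herr} for IDM and IED, together with the definitional identification of scatteredness as hereditary $\bot$-scatteredness.
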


\begin{remark}
Since a spatial locale can have more sublocales than subspaces, it is not immediately clear whether hereditary IED and IDM are conservative properties. However, in view of \ref{closedhereditary3} and \ref{closedhereditary4} above, it follows that they are indeed conservative (closed sublocales are induced by the corresponding closed subspaces).
\end{remark}

The following proposition summarizes several well-known characterizations of hereditarily extremally disconnected locales, see for example \cite{HERPROP,PJ}.

\begin{proposition}
The following conditions are equivalent for a locale $L$\textup:
\begin{enumerate}[label={\rm(\roman*)}]
\item $L$ is hereditarily extremally disconnected\textup;
\item $(a\to b)\vee (b\to a)=1$ for all $a,b\in L$\textup;\hfill \textup(Strong De Morgan law\textup)
\item $( \tbigwedge a_i)\to b\le\tbigvee (a_i\to b)$ for all $b\in L$ and all $\{a_i\}_{i=1}^n\subseteq L$\textup;
\item $( \tbigwedge a_i)\to b\le\tbigvee (a_i\to b)$ for all $b\in L$ and all $\{a_i\}_{i=1}^n\subseteq \mathfrak{c}(b)$\textup;
\item $(( \tbigvee a_i)\to b)\to b\le\tbigvee( (a_i\to b)\to b)$ for all $b\in L$ and all $\{a_i\}_{i=1}^n\subseteq L$\textup;
\item $(( \tbigvee a_i)\to b)\to b\le\tbigvee( (a_i\to b)\to b)$  for all $b\in L$ and all $\{a_i\}_{i=1}^n\subseteq \mathfrak{c}(b)$.
\end{enumerate}
\end{proposition}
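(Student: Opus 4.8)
The plan is to pivot everything on the frame structure of the closed sublocales $\mathfrak{c}(b)=\newuparrow b$ together with Corollary~\ref{closedhereditary}\,\ref{closedhereditary2}. First I would record the relevant dictionary for $\mathfrak{c}(b)$: for $x,y\ge b$ one has $x\to y\ge y\ge b$, so $x\to y\in\newuparrow b$, and hence the Heyting operation of $\mathfrak{c}(b)$ is simply the restriction of that of $L$; meets in $\mathfrak{c}(b)$ are computed as in $L$, and so are joins, since if each $a_i\ge b$ then $\tbigvee a_i\ge b$ already and the $L$-join lies in $\mathfrak{c}(b)$. Consequently the pseudocomplement in $\mathfrak{c}(b)$ is $x\mapsto x\to b$ and the double pseudocomplement is $x\mapsto(x\to b)\to b$. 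Under this dictionary, condition (iv) is precisely the second De Morgan law (Proposition~\ref{EDCAR}\,\ref{EDCAR1}) holding in every $\mathfrak{c}(b)$, and condition (vi) is precisely Proposition~\ref{EDCAR}\,\ref{EDCAR3} holding in every $\mathfrak{c}(b)$. As both simply assert that every closed sublocale is extremally disconnected, Corollary~\ref{closedhereditary}\,\ref{closedhereditary2} delivers at once (i)$\iff$(iv) and (i)$\iff$(vi).

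Next I would remove the restriction $\{a_i\}\subseteq\mathfrak{c}(b)$, that is, prove (iii)$\iff$(iv) and (v)$\iff$(vi). The forward implications are trivial specializations. For the converses the key move is that $a\to b=(a\vee b)\to b$ (property~\ref{hp5}) together with the fact that in a distributive lattice finite joins distribute over finite meets, so that $(\tbigwedge a_i)\to b=(b\vee\tbigwedge a_i)\to b=(\tbigwedge(b\vee a_i))\to b$. Since $\{b\vee a_i\}\subseteq\mathfrak{c}(b)$, applying (iv) and then~\ref{hp5} once more rewrites the right-hand side as $\tbigvee(a_i\to b)$, yielding (iii). The identical substitution of $b\vee a_i$ for $a_i$, now using $\tbigvee a_i\vee b=\tbigvee(a_i\vee b)$ for the join, upgrades (vi) to (v).

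It remains to splice in the strong De Morgan law (ii). For (iii)$\implies$(ii) I would instantiate (iii) at $n=2$ with $a_1=a$, $a_2=b$ and target element $a\wedge b$: the left-hand side is $(a\wedge b)\to(a\wedge b)=1$, while by property~\ref{hp4} the right-hand side is $(a\to b)\vee(b\to a)$. The genuinely computational step — and the one I expect to be the main obstacle — is (ii)$\implies$(iii). Here I would first settle $n=2$ by meeting $(a_1\wedge a_2)\to b$ with $1=(a_1\to a_2)\vee(a_2\to a_1)$ and distributing; writing $S_1=[(a_1\wedge a_2)\to b]\wedge(a_1\to a_2)$, one has $S_1\wedge a_1\le a_1\wedge a_2$, whence $S_1\wedge a_1\le b$ and so $S_1\le a_1\to b$, and symmetrically the other summand is $\le a_2\to b$. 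A routine induction on $n$ (splitting off $a_{n+1}$ and invoking the $n=2$ case) then gives (iii) in full generality. Assembling (i)$\iff$(iv)$\iff$(iii)$\iff$(ii) together with (i)$\iff$(vi)$\iff$(v) completes all the equivalences.
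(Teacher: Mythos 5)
Your proof is correct. There is, however, nothing in the paper to compare it against line by line: the paper states this proposition without proof, as a summary of known results with a pointer to \cite{HERPROP,PJ}. What your argument in fact reconstructs, specialized to finite families, is exactly the technique the paper uses for the infinite analogues in Section~\ref{sec6}: the reduction of (i) to a condition on closed sublocales is Corollary~\ref{closedhereditary}\,\ref{closedhereditary2}; the dictionary $x^{*\mathfrak{c}(b)}=x\to b$, $x^{**\mathfrak{c}(b)}=(x\to b)\to b$, with meets and nonempty joins in $\mathfrak{c}(b)$ computed as in $L$, together with the substitution $a_i\mapsto a_i\vee b$ justified by property~\ref{hp5}, is precisely how the paper proves Propositions~\ref{hIDM} and~\ref{hIED}; and your passage from (iv) to (iii) via the distributive identity $b\vee\tbigwedge a_i=\tbigwedge(b\vee a_i)$ is a mild variant of the paper's trick of working in $\mathfrak{c}(d)$ with $d=(\tbigwedge a_i)\wedge b$ and invoking property~\ref{hp4}. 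The one ingredient you had to supply from scratch is the splice of the strong De Morgan law (ii), which has no infinite counterpart in the paper; both directions are sound --- the instantiation of (iii) at the target $a\wedge b$ with property~\ref{hp4} gives (ii), and your two-element computation $S_1\wedge a_1\le\bigl[(a_1\wedge a_2)\to b\bigr]\wedge(a_1\wedge a_2)\le b$ plus induction gives (iii) back. One cosmetic caveat: conditions (iii) and (v) must be read for nonempty families ($n\ge1$), since for $n=0$ condition (iii) would assert $b=1\to b\le 0$; your dictionary silently assumes nonemptiness when identifying joins in $\mathfrak{c}(b)$ with joins in $L$ (the empty join in $\mathfrak{c}(b)$ is $b$, not $0$), so it would be worth saying this explicitly.
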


We now have the following characterizations of hereditarily IDM and IED locales:

\begin{proposition} \label{hIDM}
The following conditions are equivalent for a locale $L$\textup:
\begin{enumerate}[label={\rm(\roman*)}]
\item \label{hIDM1} $L$ is hereditarily IDM\textup;
\item \label{hIDM2} $( \tbigwedge a_i)\to b\le\tbigvee  (a_i\to b)$ for all $b\in L$ and all $\{a_i\}_{i\in I}\subseteq L$\textup; \hfill\textup($\tbigwedge\tbigvee$-distr\textup)
\item \label{hIDM3} $( \tbigwedge a_i)\to b\le \tbigvee (a_i\to b)$ for all $b\in L$ and all $\{a_i\}_{i\in I}\subseteq \mathfrak{c}(b)$.
\end{enumerate}
\end{proposition}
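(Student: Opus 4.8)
The plan is to make condition~\ref{hIDM3} the hinge: first I would show that \ref{hIDM3} is nothing but the assertion that every closed sublocale of $L$ is IDM, and then read off \ref{hIDM1}$\iff$\ref{hIDM3} from Corollary~\ref{closedhereditary}\,\ref{closedhereditary3}. For this, fix $b\in L$ and unravel the frame structure of the closed sublocale $\mathfrak{c}(b)=\{x\in L\mid x\ge b\}$. Arbitrary meets of elements $\ge b$ agree with the meets of $L$ (a sublocale is closed under meets), and joins agree too (since $\tbigvee s_i\ge b$, the nucleus $(-)\vee b$ fixes $\tbigvee s_i$); moreover the implication of two elements of $\mathfrak{c}(b)$ is computed as in $L$, because $x\to y$ already lies in $\mathfrak{c}(b)$ whenever $y\ge b$. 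Consequently the pseudocomplement of $s\ge b$ inside $\mathfrak{c}(b)$ is exactly $s\to b$, and substituting these into the (\ref{IDM}) law for $\mathfrak{c}(b)$ turns it verbatim into $(\tbigwedge a_i)\to b\le\tbigvee(a_i\to b)$ for $\{a_i\}\subseteq\mathfrak{c}(b)$, i.e.\ into \ref{hIDM3} at $b$. Quantifying over all $b$ gives \ref{hIDM1}$\iff$\ref{hIDM3}.

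The implication \ref{hIDM2}$\implies$\ref{hIDM3} is immediate, since \ref{hIDM3} is just \ref{hIDM2} with the family constrained to lie in $\mathfrak{c}(b)$. The only substantial step is \ref{hIDM3}$\implies$\ref{hIDM2}, and here one must resist the obvious move. Given an arbitrary family $\{a_i\}_{i\in I}\subseteq L$ and $b\in L$, the naive idea is to replace each $a_i$ by $a_i\vee b$ so as to land inside $\mathfrak{c}(b)$ and then apply \ref{hIDM3}; but this fails, because $\tbigwedge_i(a_i\vee b)$ may strictly exceed $(\tbigwedge_i a_i)\vee b$ (infinite joins need not distribute over infinite meets), and as $x\mapsto x\to b$ is antitone this pushes the inequality the wrong way, yielding only $(\tbigwedge_i(a_i\vee b))\to b\le\tbigvee_i(a_i\to b)$, which is weaker than what is wanted. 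I expect this to be the main obstacle, and the fix is to move $b$ rather than the $a_i$.

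The resolution I would use is to lower the target below the meet. Writing $d=\tbigwedge_i a_i$ and $b'=b\wedge d$, property~\ref{hp4} gives $d\to b=d\to(d\wedge b)=d\to b'$; and now \emph{every} $a_i$ satisfies $a_i\ge d\ge b'$, so the unchanged family already lies in $\mathfrak{c}(b')$. Applying \ref{hIDM3} at $b'$ and then using $b'\le b$ (so that $a_i\to b'\le a_i\to b$) gives
\[(\tbigwedge a_i)\to b=d\to b=d\to b'\le\tbigvee(a_i\to b')\le\tbigvee(a_i\to b),\]
which is \ref{hIDM2}. Thus the delicate point is precisely the choice $b'=b\wedge\tbigwedge a_i$: dropping the target beneath the meet forces the family into the correct closed sublocale automatically, whereas every attempt to keep $b$ fixed and enlarge the $a_i$ is defeated by the failure of infinite co-distributivity. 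Once this is seen the argument is short, the remaining verifications being the structure of $\mathfrak{c}(b)$ from the first paragraph together with the two monotonicity remarks for $\to$.
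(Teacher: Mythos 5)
Your proposal is correct and relies on exactly the same ingredients as the paper's proof: the computation that pseudocomplements in $\mathfrak{c}(b)$ are given by $x\to b$ (with meets and nonempty joins agreeing with those of $L$), the reduction of heredity to closed sublocales via Corollary~\ref{closedhereditary}\,\ref{closedhereditary3}, and the key ``shrink the target'' trick $b'=b\wedge\tbigwedge a_i$ combined with property~\ref{hp4}. The only difference is the routing of the implications --- the paper attaches the shrink trick to \ref{hIDM1}$\implies$\ref{hIDM2} (invoking heredity for $\mathfrak{c}(b')$) and closes the cycle with \ref{hIDM3}$\implies$\ref{hIDM1}, whereas you prove \ref{hIDM1}$\iff$\ref{hIDM3} first and then use the trick for the purely order-theoretic implication \ref{hIDM3}$\implies$\ref{hIDM2} --- which is a cosmetic reorganization of the same argument.
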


\begin{proof}
 \ref{hIDM1}$\implies$\ref{hIDM2}: Let $b\in L$, $\{a_i\}_{i\in I}\subseteq L$ and $d=(\tbigwedge a_i)\wedge b$. By hypothesis $\mathfrak{c}(d)$ is IDM, and since pseudocomplementation in $\mathfrak{c}(d)$ is given by $x^{* \mathfrak{c}(d)}=x\to d$ and $\mathfrak{c}(d)$ is closed under meets and nonempty joins in $L$ it follows that
$(\tbigwedge a_{i})\to d=(\tbigwedge a_{i})^{*\mathfrak{c}(d)}\le\tbigvee a_i^{*\mathfrak{c}(d)}=\tbigvee (a_i\to d)$. Consequently
\[( \tbigwedge a_i)\to b=(\tbigwedge a_{i})\to d\le\tbigvee (a_i\to d)\le\tbigvee  (a_i\to b).\]
\ref{hIDM2}$\implies$\ref{hIDM3} is obvious.\\[2mm]
\ref{hIDM3}$\implies$\ref{hIDM1}: By Corollary~\ref{closedhereditary}, it is enough to prove that each closed sublocale is IDM. Let $b\in L$ and $\{a_i\}_{i\in I}\subseteq \mathfrak{c}(b)$ such that $\tbigwedge a_{i}=b$. Then
\[\tbigvee a_{i}^{*\mathfrak{c}(b)}=\tbigvee (a_{i}\to b)=( \tbigwedge a_i)\to b=1.\]
By Proposition~\ref{eqq1}\,\ref{eqq12} it follows that $\mathfrak{c}(b)$ is IDM.
\end{proof}

\begin{proposition} \label{hIED}
The following conditions are equivalent for a locale $L$\textup:
\begin{enumerate}[label={\rm(\roman*)}]
\item \label{hIED1} $L$ is hereditarily IED\textup;
\item \label{hIED2} $(( \tbigvee a_i)\to b)\to b\le\tbigvee ((a_i\to b)\to b)$ for all $b\in L$ and all $\{a_i\}_{i\in I}\subseteq L$\textup;
\item \label{hIED3} $(( \tbigvee a_i)\to b)\to b\le\tbigvee ((a_i\to b)\to b)$  for all $b\in L$ and all $\{a_i\}_{i\in I}\subseteq \mathfrak{c}(b)$\textup.
\end{enumerate}
\end{proposition}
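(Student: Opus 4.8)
The plan is to mirror the structure of the proof of Proposition~\ref{hIDM}, replacing the use of Proposition~\ref{eqq1}\,\ref{eqq12} (the (\ref{IDM}) characterization) by Proposition~\ref{eqq2}\,\ref{eqq25} (the (\ref{IED}) characterization), and to lean on Corollary~\ref{closedhereditary}\,\ref{closedhereditary4}, which reduces hereditary IED to the IED-ness of every closed sublocale $\mathfrak{c}(b)$. The key algebraic fact throughout is that in the closed sublocale $\mathfrak{c}(b)=\newuparrow b$ the pseudocomplement is computed as $x^{*\mathfrak{c}(b)}=x\to b$, and consequently the double pseudocomplement is $x^{**\mathfrak{c}(b)}=(x\to b)\to b$. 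This last expression is exactly what appears on both sides of condition \ref{hIED2}, which is the structural reason the statement should come out cleanly.

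First I would prove \ref{hIED1}$\implies$\ref{hIED2}. Given $b\in L$ and $\{a_i\}_{i\in I}\subseteq L$, I would pass to the closed sublocale $\mathfrak{c}(d)$ where $d=(\tbigvee a_i)\wedge b$ (this choice guarantees $a_i\vee d$ and the relevant elements land appropriately, paralleling the use of $d=(\tbigwedge a_i)\wedge b$ in Proposition~\ref{hIDM}); alternatively, and more simply, I would work directly in $\mathfrak{c}(b)$. The subtlety is that joins in $\mathfrak{c}(b)=\newuparrow b$ of a family need not coincide with joins in $L$ unless the join already lies above $b$, so I would use that $\tbigvee^{\mathfrak{c}(b)} a_i = (\tbigvee a_i)\vee b$ and that for elements $a_i\geq b$ this equals $\tbigvee a_i$. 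Applying the IED law of $\mathfrak{c}(b)$ in the form $(\tbigvee^{\mathfrak{c}(b)} a_i)^{**\mathfrak{c}(b)}\le \tbigvee^{\mathfrak{c}(b)} a_i^{**\mathfrak{c}(b)}$ and then translating each occurrence of $(-)^{*\mathfrak{c}(b)}$ into $(-)\to b$ yields precisely the inequality in \ref{hIED2}. The implication \ref{hIED2}$\implies$\ref{hIED3} is then immediate, being a restriction of the quantifier to $\{a_i\}\subseteq\mathfrak{c}(b)$.

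For \ref{hIED3}$\implies$\ref{hIED1}, I would invoke Corollary~\ref{closedhereditary}\,\ref{closedhereditary4} and show that each $\mathfrak{c}(b)$ is IED by verifying Proposition~\ref{eqq2}\,\ref{eqq25}. So I would take $\{a_i\}_{i\in I}\subseteq\mathfrak{c}(b)$ with $(\tbigsqcup a_i)^{*\mathfrak{c}(b)}=b$ (the zero of $\mathfrak{c}(b)$ is $b$, so this is the analogue of ``join has pseudocomplement $0$''), which translates to $(\tbigvee a_i)\to b=1$, i.e. $\tbigvee a_i\le b$, hence $\tbigvee a_i = b$ using $a_i\ge b$. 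Feeding this into \ref{hIED3} and rewriting via $x^{**\mathfrak{c}(b)}=(x\to b)\to b$ should force $\tbigvee a_i^{**\mathfrak{c}(b)}$ to equal the top $1$ of $\mathfrak{c}(b)$, which is exactly condition \ref{eqq25} for $\mathfrak{c}(b)$; therefore $\mathfrak{c}(b)$ is IED and $L$ is hereditarily IED.

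The main obstacle I anticipate is bookkeeping the distinction between joins and pseudocomplements computed in $\mathfrak{c}(b)$ versus in $L$. Unlike meets and pseudocomplements in \emph{dense} sublocales (which agree with the ambient ones), joins in a \emph{closed} sublocale differ from ambient joins, and the pseudocomplement formula $x^{*\mathfrak{c}(b)}=x\to b$ replaces $x\to 0$; getting every one of these substitutions correct — particularly verifying that $(\tbigvee a_i)\to b$ really is the ambient rewriting of the sublocale-join's pseudocomplement, and that the double-pseudocomplement unfolds to $((\tbigvee a_i)\to b)\to b$ — is where the care lies. Once that dictionary between $\mathfrak{c}(b)$-operations and ambient Heyting operations is set up cleanly (which is standard and already implicitly used in Proposition~\ref{hIDM}), the two implications are formal and short.
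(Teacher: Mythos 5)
Your implications \ref{hIED1}$\implies$\ref{hIED2} and \ref{hIED2}$\implies$\ref{hIED3} are in substance the paper's own argument: the paper makes your ``work directly in $\mathfrak{c}(b)$'' precise by setting $b_i=a_i\vee b$, applying the (\ref{IED}) law of $\mathfrak{c}(b)$ to the family $\{b_i\}\subseteq\mathfrak{c}(b)$, and translating back through the identities $x^{*\mathfrak{c}(b)}=x\to b$ and $a\to b=(a\vee b)\to b$; this is exactly the dictionary you describe (your alternative first suggestion $d=(\tbigvee a_i)\wedge b$ is a dead end, but you discard it yourself). The problem is in \ref{hIED3}$\implies$\ref{hIED1}, where you mistranslate the hypothesis of Proposition~\ref{eqq2}\,\ref{eqq25}. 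You correctly state it as $(\tbigvee a_i)^{*\mathfrak{c}(b)}=b$, i.e.\ ``the pseudocomplement of the join is the zero of $\mathfrak{c}(b)$'', but then render it as $(\tbigvee a_i)\to b=1$, i.e.\ $\tbigvee a_i=b$. Since $x^{*\mathfrak{c}(b)}=x\to b$, the correct rendering is $(\tbigvee a_i)\to b=b$: your version says instead that the \emph{join itself} is the zero, which is a much more restrictive (and different) condition.

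This is not a cosmetic slip; the step fails as written. Under your reading every $a_i$ equals $b$, so the left-hand side of \ref{hIED3} is $((\tbigvee a_i)\to b)\to b=1\to b=b$ (not $1$), and the right-hand side is $\tbigvee((b\to b)\to b)=b$. Thus \ref{hIED3} only yields the trivial inequality $b\le b$, and the conclusion you claim it forces, $\tbigvee a_i^{**\mathfrak{c}(b)}=1$, is actually \emph{false} here unless $b=1$. In short, you verify condition \ref{eqq25} only for the wrong class of families (those with join equal to the bottom of $\mathfrak{c}(b)$, for which the hypothesis of \ref{eqq25} does not even hold in general), and for them the asserted conclusion is untrue. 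The repair is the correct translation, which is how the paper argues: from $(\tbigvee a_i)^{*\mathfrak{c}(b)}=b$ one gets $1=b\to b=(\tbigvee a_i)^{*\mathfrak{c}(b)}\to b=((\tbigvee a_i)\to b)\to b$, and then \ref{hIED3} gives $1\le\tbigvee((a_i\to b)\to b)=\tbigvee a_i^{**\mathfrak{c}(b)}$, which is precisely what Proposition~\ref{eqq2}\,\ref{eqq25} requires for $\mathfrak{c}(b)$ to be IED; Corollary~\ref{closedhereditary}\,\ref{closedhereditary4} then finishes the proof.
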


\begin{proof}
\ref{hIED1}$\implies$\ref{hIED2}: Let $b\in L$, $\{a_i\}_{i\in I}\subseteq L$ and $b_i=a_i\vee b$ for each $i\in I$. By hypothesis $\mathfrak{c}(b)$ is IED, and so
$$((\tbigvee b_{i})\to b)\to b=(\tbigvee b_{i})^{**\mathfrak{c}(b)}\le\tbigvee b_i^{**\mathfrak{c}(b)}=\tbigvee ((b_i\to b)\to b).$$ Since $a\to b=(a\vee b)\to b$ or each $a\in L$, it follows that
\begin{align*}
(( \tbigvee a_i)\to b)\to b& =((\tbigvee b_{i})\to b)\to b\le  \tbigvee ((b_i\to b)\to b)=\tbigvee ((a_i\to b)\to b).
\end{align*}
\ref{hIED2}$\implies$\ref{hIED3} is obvious.\\[2mm]
\ref{hIED3}$\implies$\ref{hIED1}: By Corollary~\ref{closedhereditary}, it is enough to prove that each closed sublocale is IED. Let $b\in L$ and $\{a_i\}_{i\in I}\subseteq \mathfrak{c}(b)$ such that $(\tbigvee a_{i})^{*\mathfrak{c}(b)}=b$. Then
\begin{align*}
1&=(\tbigvee a_{i})^{*\mathfrak{c}(b)}\to b=(( \tbigvee a_i)\to b)\to b\le\tbigvee ((a_i\to b)\to b)=\tbigvee a_{i}^{**\mathfrak{c}(b)}.
\end{align*}
By Proposition~\ref{eqq2}\,\ref{eqq25} it follows that $\mathfrak{c}(b)$ is IDM.
\end{proof}

Note once again that all inequalities in the statements of the three previous propositions are indeed equalities.

\begin{remark}
 By Fact~\ref{fact3.10} we now have that a locale $L$ is hereditarily IDM if and only if it is scattered and hereditarily IED.
\end{remark}

\begin{example}
(1) Every linear locale is hereditarily IED. Indeed, a sublocale of $L$ is also linear and thus IED by Remark~\ref{remarks3.9}\,(2).\\[2mm]
(2) For any locale $L$, the IDM locale $L^*$ constructed in Remark~\ref{remarks3.3}\,(2) is not, in general, hereditarily IDM nor hereditarily IED (in fact, $L^*$ is hereditarily IDM, resp. IED, if and only if so is $L$, because proper closed sublocales of both $L^*$ and $L$ coincide).
\end{example}

\section{Categorical aspects of infinite extremal disconnectedness}\label{sec7}
In what follows, the full subcategories of $\cat{Frm}$ consisting of IED frames and of complete Boolean algebras will be denoted by $\cat{IEDFrm}$ and $\cat{CBAlg}$ respectively. Note that morphisms in $\cat{CBAlg}$ are exactly complete Boolean homomorphisms.
\smallskip

Banaschewski and Pultr studied in \cite{BP} conditions under which the Booleanization construction behaves functorially in a natural manner, in the sense that for each frame homomorphism $f\colon L\to M$ there is a (unique) frame homomorphism $B(f)\colon B_L\to B_M$ such that the following square in \cat{Frm} commutes
\[\begin{tikzcd}
 L \arrow{r}{B(f)} \arrow{d}[swap]{(-)^{**}} &\arrow{d}{(-)^{**}} M\   \\
B_L  \arrow{r}{f} & B_M
\end{tikzcd}\]
It was shown there that the above condition is satisfied if and only if $f$ is \emph{weakly open} (sometimes also called \emph{skeletal}), i.e. if $f(a^{**})\leq f(a)^{**}$ for all $a\in L$.
This means that for any subcategory $\CC$ of $\cat{Frm}$ consisting only of weakly open morphisms and containing the category $\cat{CBAlg}$ there is a natural transformation $\eta\colon 1_\CC \longrightarrow I\circ B$ where $B\colon \cat{IEDFrm}\longrightarrow \cat{CBAlg}$ denotes Booleanization and $I\colon \cat{CBAlg}\longrightarrow \cat{IEDFrm}$ denotes inclusion, and the components of $\eta$ are given by the double-negation morphisms (which are trivially weakly open). 

It was further pointed out in \cite[1.4]{BP} that we cannot hope to make $B$ functorial by restricting the class of objects rather than the class of morphisms, for if a \emph{full} subcategory $\CC$ satisfies the above condition then $\CC$ is necessarily the whole of the category of complete Boolean algebras.

The goal of this section is to propose an alternative setting for making the Booleanization functorial, still behaving naturally and restricting to a \emph{full} subcategory of \cat{Frm}; nevertheless changing the effect of $B$ in morphisms and the components of the natural transformation. More precisely, we consider the largest class of locales for which the inclusion $B_L\hookrightarrow L$ is a frame homomorphism --- the class of IED locales--- and show that the desired functoriality condition is then satisfied.

\begin{lemma}
Let $\mathcal{C}$  be a subcategory of $\cat{Frm}$ consisting of IED locales. Then the Booleanization extends to a functor $B\colon \mathcal{C}\longrightarrow \cat{CBAlg}$ naturally, that is, for each morphism $f\colon L\longrightarrow M$ in $\mathcal{C}$  we have a commutative diagram of frame homomorphisms
\[\begin{tikzcd}
 B_L \arrow{r}{B(f)} \arrow[hookrightarrow]{d}[swap]{i_L} &\arrow[hookrightarrow]{d}{i_M} B_M \   \\
L  \arrow{r}{f} & M
\end{tikzcd}\]
\end{lemma}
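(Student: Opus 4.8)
The plan is to define, for each frame homomorphism $\map{f}{L}{M}$ between IED locales, the arrow $B(f)$ to be nothing more than the corestriction of $f$ to the Booleanizations, namely $B(f)(a)=f(a)$ for $a\in B_L$. With this definition the commutativity of the square, $i_M\circ B(f)=f\circ i_L$, holds by construction, and functoriality ($B(\mathrm{id}_L)=\mathrm{id}_{B_L}$ and $B(g\circ f)=B(g)\circ B(f)$) is immediate since $B(f)$ is literally a restriction of $f$. Hence everything reduces to two points: that $B(f)$ is \emph{well defined}, i.e. that $f[B_L]\subseteq B_M$, and that $B(f)$ is a morphism of $\cat{CBAlg}$.

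The crux is well-definedness, and the key observation is that \emph{in an IED locale every regular element is complemented}. Indeed, for $a\in B_L$ one always has $a\wedge a^*=0$; moreover, the join of $a$ and $a^*$ computed in the Boolean algebra $B_L$ is
\[(a\vee a^*)^{**}=\bigl((a\vee a^*)^*\bigr)^*=(a^*\wedge a^{**})^*=(a^*\wedge a)^*=0^*=1,\]
using the first De Morgan law and $a^{**}=a$. Since by Proposition~\ref{veep} the inclusion $B_L\hookrightarrow L$ preserves joins, this value is also $a\vee a^*$ as computed in $L$; thus $a\vee a^*=1$ in $L$, so $a$ is complemented with complement $a^*$. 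A frame homomorphism preserves finite meets and joins, hence carries this complemented pair to a complemented pair: $f(a)\wedge f(a^*)=f(0)=0$ and $f(a)\vee f(a^*)=f(1)=1$. Finally, any complemented element $x$ of any frame is regular, since $x^{**}=x^{**}\wedge(x\vee x')=(x^{**}\wedge x)\vee(x^{**}\wedge x')\le x\vee(x^{**}\wedge x^*)=x$ (where $x'$ is the complement and $x'\le x^*$). Therefore $f(a)\in B_M$, as required.

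It remains to check that $B(f)$ is a frame homomorphism, indeed a complete Boolean homomorphism. Finite meets in $B_L$ and $B_M$ coincide with those in $L$ and $M$ because sublocales are closed under meets, and $f$ preserves them; arbitrary joins in $B_L$ and $B_M$ coincide with those in $L$ and $M$ by Proposition~\ref{veep} (the Booleanizations are subframes), and $f$ preserves them as well. Thus $B(f)$ preserves finite meets, arbitrary joins, $0$ and $1$, i.e. it is a frame homomorphism. Being a frame homomorphism between complete Boolean algebras, it automatically preserves complements and hence, by De Morgan, arbitrary meets; so it is a complete Boolean homomorphism, that is, a morphism of $\cat{CBAlg}$.

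I expect the only genuine obstacle to be the well-definedness step $f[B_L]\subseteq B_M$; the rest is formal. This is precisely where the hypothesis enters, on both sides: IED of the source $L$ is what turns regular elements into complemented ones (so that their images stay regular), while IED of the target $M$ is what guarantees that joins in $B_M$ agree with those in $M$, making $B(f)$ join-preserving. Dropping IED, the join $a\vee a^*$ need no longer be $1$, regular elements need not be complemented, and the corestriction of $f$ may fail to land in $B_M$.
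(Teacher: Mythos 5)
Your proof is correct and follows essentially the same route as the paper's: you show that $f$ restricts to the Booleanizations because regular elements of an IED locale are complemented and frame homomorphisms carry complemented elements to complemented (hence regular) elements, and then conclude that the restriction is a frame homomorphism from the subframe property of Proposition~\ref{veep}. The only cosmetic differences are that you derive complementedness of regular elements directly from the subframe property (where the paper simply invokes IED $\Rightarrow$ ED and the standard fact that regular $=$ complemented in ED locales), and that you spell out the routine verification that the restriction is a complete Boolean homomorphism, which the paper leaves implicit.
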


\begin{proof}
We noted in Proposition~\ref{veep} that for an IED locale $L$,  $B_L$ is a subframe of $L$, i.e. the inclusion $i_L\colon B_L\hookrightarrow L$ lies in $\cat{Frm}$. Let now $f\colon L\longrightarrow M$ be a frame \pagebreak homomorphism in $\CC$. Since IED implies extremal disconnectedness, $L$ is in particular extremally disconnected.  Now,  $f$ preserves complements, and so it sends regular(=complemented) elements to complemented elements. Therefore, $f$ restricts to a map $B(f)\colon B_L\longrightarrow B_M$. The restriction $B(f)$ is of course a frame homomorphism,  because $B_L$ and $B_M$ are subframes of $L$ resp. $M$. Hence all the maps in the square above are frame homomorphisms.
\end{proof}

\begin{proposition} The  subcategory $\cat{CBAlg}$ is a full \textup(mono\textup)coreflective category of $\cat{IEDFrm}$.
\end{proposition}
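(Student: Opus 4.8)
The plan is to realize the Booleanization functor $B\colon\cat{IEDFrm}\to\cat{CBAlg}$ of the previous lemma as a right adjoint to the inclusion $I\colon\cat{CBAlg}\hookrightarrow\cat{IEDFrm}$, with counit the family of subframe inclusions $i_L\colon B_L\hookrightarrow L$, and then to observe that these counit components are monomorphisms.

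Before treating the adjunction I would record that $I$ is \emph{full}. Every complete Boolean algebra $A$ is trivially an IED locale, since $B_A=A$; and any frame homomorphism $h$ between complete Boolean algebras is automatically a complete Boolean homomorphism. Indeed, $h$ preserves finite meets and arbitrary joins, and from $a\vee\neg a=1$ and $a\wedge\neg a=0$ one obtains $h(\neg a)=\neg h(a)$, so $h$ preserves complements and hence, by De Morgan, arbitrary meets. Thus the morphisms of $\cat{CBAlg}$ between two of its objects are precisely the frame homomorphisms between them, and $I$ is full.

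The counit is essentially already at hand. For an IED locale $L$, Proposition~\ref{veep} gives that $B_L$ is a subframe, so $i_L\colon B_L\hookrightarrow L$ is a frame homomorphism; being injective, it is a monomorphism in $\cat{Frm}$. The commuting square of the previous lemma says exactly that the family $(i_L)_L$ is natural, i.e.\ that it constitutes a natural transformation $\varepsilon\colon I\circ B\Longrightarrow 1_{\cat{IEDFrm}}$.

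The core of the argument is the universal property of $i_L$. Given a complete Boolean algebra $A$ and a frame homomorphism $g\colon A\to L$ into an IED locale $L$, I would show that $g$ factors uniquely through $i_L$. The decisive observation --- the same one used for fullness --- is that every $a\in A$ is complemented and frame homomorphisms preserve complements, so each $g(a)$ is complemented, hence regular, in $L$; therefore $g(A)\subseteq B_L$. Consequently $g$ corestricts to a map $\tilde g\colon A\to B_L$ with $i_L\circ\tilde g=g$, unique because $i_L$ is monic; since $i_L$ is an injective frame homomorphism, $\tilde g$ is again a frame homomorphism, and by the fullness established above it is a complete Boolean homomorphism. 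This yields the natural bijection $\cat{IEDFrm}(I A,L)\cong\cat{CBAlg}(A,B L)$, so $I\dashv B$ with counit $\varepsilon=(i_L)$; hence $\cat{CBAlg}$ is coreflective in $\cat{IEDFrm}$, full by the first step and with monomorphic counit by the second, i.e.\ a full \textup(mono\textup)coreflective subcategory, as claimed. I expect no genuine obstacle: the only point requiring a little care is that the corestriction $\tilde g$ be a morphism of $\cat{CBAlg}$ rather than merely a frame homomorphism, which is immediate once fullness is in place.
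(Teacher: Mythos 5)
Your proposal is correct and follows essentially the same route as the paper: the decisive point in both is that frame homomorphisms preserve complements, so any frame homomorphism from a complete Boolean algebra into an IED locale $L$ lands in $B_L$ and factors uniquely through the inclusion $B_L\hookrightarrow L$, which is a frame homomorphism by Proposition~\ref{veep}. You merely spell out the supporting details (fullness of the inclusion, naturality, and monicity of the counit) that the paper leaves implicit.
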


\begin{proof}
Since frame homomorphisms preserve complements, any frame homomorphism $f\colon B\longrightarrow L$ where $B$ is Boolean maps $B$ into $B_L$, and hence it factors uniquely through the map  $B_L\hookrightarrow L$ (the counit of the adjunction), which is a frame homomorphism provided $L$ is IED.
\end{proof}

In passing, we note that \cat{IEDFrm}  cannot be cocomplete because neither is $\cat{CBAlg}$.

\begin{remark}
It is, however, false that the Booleanization functor is a left adjoint, and hence, in particular, $B$ is not a reflection functor, despite the fact that the category \cat{CBAlg} is closed under limits in \cat{IEDFrm}  (indeed, limits in \cat{CBAlg} are computed as in \cat{Frm}, and it is thus clear that they are also limits in the full subcategory \cat{IEDFrm}).\\

 Let $\boldsymbol{1}=\{ 0=1 \}$ and $\boldsymbol{2}=\{0,1\}$ denote the one-element and two-element Boolean algebras respectively, namely the terminal and initial objects in $\cat{Frm}$. Assume by way of contradiction that $G\colon \cat{CBAlg}\longrightarrow $ \cat{IEDFrm} is a right adjoint of $B\colon$ \cat{IEDFrm} $\longrightarrow \cat{CBAlg}$. Clearly, $G$ is not the constant functor sending every object to $\boldsymbol{1}$,  so there is a Boolean algebra $B_0$ with $G(B_0)\ne \boldsymbol{1}$. Therefore, the unique frame homomorphism $\boldsymbol{2}\to G(B_0)$ is injective (because  $0\ne 1$ in $G(B_0)$), i.e. it is monic in $\cat{Frm}$. Since for each frame $L$ the representable $\cat{Frm}(L,-)$ preserves monos, it follows that we have an injective map $\cat{Frm}(L,\boldsymbol{2})\hookrightarrow \cat{Frm}(L,G(B_0))$, and by transposition across the adjunction (whenever $L$ is IED) the latter is an injective map $\cat{Pt}(L)\hookrightarrow \cat{Frm}(B_L,B_0)$. This is clearly not true in general, e.g. for $L=\Omega(X)$ for an irreducible sober space with more than one point, one has $\cat{Pt}(L)\cong X$ but $\cat{Frm}(B_L,B_0)=\cat{Frm}(\boldsymbol{2},B_0)=\{\star\}$.

This remark should be compared to \cite[Theorem~3.1]{BP}, which asserts that the Booleanization is (a left adjoint but) not a right adjoint.
\end{remark}

We conclude this section by exploring further the category of IED locales. The following proposition (together with the results thereafter) provides some evidence of the fact that the IED condition itself is actually a better behaved strengthening of extremal disconnectedness compared to the IDM condition. Furthermore, in view of Caramello's DeMorganization construction \cite[Theorem~2.10]{DM} (namely, the existence of the largest dense De Morgan sublocale) it also seems to share a stronger parallel with extremal disconnectedness.\pagebreak

Before proving the main result, we note the following: if $L$ is a locale and  $f,g\colon\mathcal{P}(L)\longrightarrow L$  any two mappings, then the subset 
\[S_{f,g}=\{a\in L\mid f(A)\to a=g(A)\to a \mbox{ for any } A\subseteq L\}\]
  is always a sublocale of $L$.\

\begin{proposition}\label{ldense}
For any locale there is the largest dense IED sublocale.
\end{proposition}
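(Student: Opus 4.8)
The plan is to exhibit the largest dense IED sublocale explicitly as a set of the form $S_{f,g}$ from the preliminary observation, which immediately guarantees that it is a sublocale. Concretely, I would take $f,g\colon\mathcal{P}(L)\to L$ given by
\[ f(A)=\Bigl(\tbigvee A\Bigr)^{**},\qquad g(A)=\tbigvee_{x\in A} x^{**}, \]
and set $S=S_{f,g}$. Two elementary identities are needed. First, by the first De Morgan law $(\tbigvee A)^{*}=\tbigwedge_{x\in A}x^{*}=(\tbigvee_{x\in A}x^{**})^{*}$, so $f(A)=g(A)^{**}$. Second, as $A$ ranges over $\mathcal{P}(L)$ the element $g(A)$ ranges exactly over the set $J=\{\tbigvee D\mid D\subseteq B_L\}$ of joins of regular elements. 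Hence membership in $S$ simplifies to the statement that $a\in S$ if and only if $c\to a=c^{**}\to a$ for every $c\in J$. Density of $S$ is then immediate: for $a=0$ we get $f(A)^{*}=(\tbigvee A)^{*}=g(A)^{*}$, so $0\in S$, and therefore $B_L\subseteq S$.

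The key technical step is a pointwise description of the IED property for dense sublocales: a dense sublocale $T$ is IED if and only if $\nu_T(c)=c^{**}$ for every $c\in J$. To prove it I would use that for dense $T$ one has $B_L\subseteq T$, that pseudocomplements in $T$ agree with those in $L$, and that $\nu_T\le(-)^{**}$ (since $T\supseteq B_L$). For a family $\{s_i\}\subseteq T$ the join in $T$ is $\nu_T(\tbigvee s_i)$, and the squeeze $\tbigvee s_i\le \nu_T(\tbigvee s_i)\le(\tbigvee s_i)^{**}$ followed by $(-)^{**}$ gives $(\bigsqcup_T s_i)^{**}=(\tbigvee s_i)^{**}=(\tbigvee s_i^{**})^{**}$; meanwhile $\bigsqcup_T s_i^{**}=\nu_T(\tbigvee s_i^{**})$ because each $s_i^{**}\in B_L\subseteq T$. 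Writing $c=\tbigvee s_i^{**}\in J$, condition \textup{(\ref{IED})} applied in the locale $T$ reads $c^{**}\le\nu_T(c)$; as $\nu_T(c)\le c^{**}$ always and $c$ runs through all of $J$ when $\{s_i\}$ runs through $T$, this is precisely $\nu_T(c)=c^{**}$ on $J$.

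With this characterization both remaining points fall out. For maximality, let $T$ be any dense IED sublocale and $a\in T$; for $c\in J$ the sublocale identity $c\to a=\nu_T(c)\to a$ together with $\nu_T(c)=c^{**}$ yields $c\to a=c^{**}\to a$, so $a\in S$ and hence $T\subseteq S$. To see that $S$ itself is IED I verify $\nu_S(c)=c^{**}$ for $c\in J$: since $c^{**}\in B_L\subseteq S$ and $c^{**}\ge c$ we have $\nu_S(c)\le c^{**}$; conversely, if $s\in S$ and $s\ge c$ then $c\to s=1$, so by the defining property of $S$ also $c^{**}\to s=1$, i.e. $c^{**}\le s$, whence $\nu_S(c)\ge c^{**}$. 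Thus $\nu_S(c)=c^{**}$ on $J$, and the characterization shows $S$ is IED. Being dense, IED, and containing every dense IED sublocale, $S$ is the largest such sublocale.

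The main obstacle is getting the pointwise characterization right, in particular being careful that the quantifier ranges over $c\in J$ (joins of regular elements) rather than over all of $L$: it is exactly this restriction that makes $S$ large enough to contain every dense IED sublocale while remaining IED itself. Once the characterization is in place, the rest reduces to the standard sublocale identity $x\to s=\nu_S(x)\to s$ and the inequality $\nu_T\le(-)^{**}$ valid for dense sublocales.
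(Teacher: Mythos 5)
Your proof is correct, and it works with the same candidate sublocale as the paper: your $S_{f,g}$ with $f(A)=(\tbigvee A)^{**}$ and $g(A)=\tbigvee_{x\in A}x^{**}$ is literally the set $S=\{a\in L\mid (\tbigvee a_{i})^{**}\to a=(\tbigvee a_{i}^{**})\to a \mbox{ for all } \{a_i\}_{i\in I}\subseteq L\}$ used there, and the outline (density, maximality, then IED-ness of $S$) is also the same. Where you genuinely diverge is in the key lemma driving the verification. The paper computes with arbitrary families $\{a_i\}\subseteq L$ and leans on the fact that the dense quotient map $\nu_T$ is nearly open (it preserves pseudocomplements), deriving $\nu_T\bigl(\tbigvee a_i^{**}\bigr)=\nu_T\bigl((\tbigvee a_i)^{**}\bigr)$ from the \textup{(IED)} law in $T$ and concluding via the identity $x\to t=\nu_T(x)\to t$; a second computation of the same kind then shows that $S$ itself is IED. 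You instead distill a pointwise characterization: a dense sublocale $T$ is IED if and only if $\nu_T(c)=c^{**}$ for every $c$ in the set $J$ of joins of regular elements, proved by the elementary squeeze $\tbigvee s_i\le\nu_T(\tbigvee s_i)\le(\tbigvee s_i)^{**}$ together with $B_L\subseteq T$ and the agreement of pseudocomplements on dense sublocales. This buys you two things: both maximality of $S$ and the IED property of $S$ become immediate consequences of one and the same lemma (rather than two parallel computations), and the role of the restricted quantification over $J$ --- which is exactly what makes $S$ large enough while still IED --- is made explicit, whereas in the paper it remains implicit. The paper's route, by contrast, never needs the auxiliary set $J$ and is shorter to state, at the cost of invoking near-openness of dense surjections as an external fact.
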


\begin{proof}
Let 
\[S=\{a\in L\mid (\tbigvee a_{i})^{**}\to a=(\tbigvee a_{i}^{**})\to a\mbox{ for every }\{a_i\}_{i\in I}\subseteq L\}.\]
By the comment before the statement,  $S$ is a sublocale of $L$; and an application of the first De Morgan law shows that $0\in S$, i.e. $S$ is a dense sublocale. If $T\subseteq L$ is an arbitrary dense IED sublocale of $L$, we want to show that $T\subseteq S$. By density, pseudocomplements coincide in each of the locales $T$,  $S$ and $L$. Let $\nu_T$ denote the left-adjoint to the sublocale embedding $\iota_T\colon T\to L$ and denote the joins in $T$ by $\sqcup$. Since $\nu_T$ is a dense surjection, it preserves pseudocomplements (i.e. it is nearly open). Let $t\in T$. Our goal is to show  that $t\in S$, that is, $\left(\tbigvee a_{i}\right)^{**} \to t = \left(\tbigvee a_{i}^{**}\right)\to t$ for any family $\{a_i\}_{i\in I}\subseteq L$. Since $T$ satisfies (\ref{IED}), one obtains $$\nu_T\left( \tbigvee a_{i}^{**} \right)=\tbigsqcup \nu_T(a_i)^{**}=\bigl( \tbigsqcup \nu_T(a_i)\bigr)^{**}=\bigl(\nu_T(\tbigvee a_i ) \bigr)^{**} = \nu_T\bigl( ( \tbigvee a_i )^{**}\bigr).$$ 
Then, since $t\in T$, we have
$$(\tbigvee a_{i})^{**} \to t =\nu_T\bigl((\tbigvee a_{i})^{**}  \bigr) \to t = \nu_T( \tbigvee a_{i}^{**} ) \to t =  (\tbigvee a_{i}^{**}) \to t,$$
as required. The only point remaining is to show that $S$ is IED.  Let $\{s_i\}_{i\in I}\subseteq S$. Then  
$$(\tbigsqcup s_i)^{**}=\bigl(\nu_{S}( \tbigvee s_{i})\bigr)^{**}  =\nu_{S}\bigl(  (\tbigvee s_{i})^{**} \bigr) = \nu_{S} ( \tbigvee s_{i}^{**})=\tbigsqcup s_{i}^{**},$$
where $\tbigsqcup$ now denotes join in $S$. This proves the result.
\end{proof}

\begin{remark}
The construction of the largest IED sublocale is not generally functorial  (this should not be a surprise because neither of the Booleanization or the DeMorganization construction \cite{DM} are normally functorial).  Nevertheless, there are certain  morphisms for which it is. We do not know how to characterize the class of those morphisms which restrict to the largest IED sublocales but it notably includes all the nearly open frame homomorphisms (cf. Remark~\ref{nearlyopen}).
\end{remark}

\begin{lemma}
Let $L$ be a linear locale. There exists the largest dense IDM sublocale if and only if $L$ is IDM.
\end{lemma}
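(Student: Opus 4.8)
The plan is to make the structure of sublocales of a chain completely explicit, deduce from it a clean characterization of the dense IDM sublocales, and then play the existence of a \emph{largest} such sublocale against the failure of (\ref{IDM}).

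First I would record how sublocales of a linear locale $L$ look. In a chain the Heyting operation satisfies $a\to b=1$ when $a\le b$ and $a\to b=b$ when $a>b$, so the sublocale condition $a\to s\in S$ is automatic as soon as $1\in S$. Hence the sublocales of $L$ are \emph{exactly} the subsets $S\subseteq L$ closed under arbitrary meets (the empty meet forcing $1\in S$); each is again a complete chain, meets in $S$ are computed as in $L$, and $S$ is dense precisely when $0\in S$. Combining this with Remark~\ref{remarks3.3}\,(2) (a chain is IDM iff $0$ is completely prime) I would prove the key dictionary: a dense sublocale $S$ is IDM if and only if $S\setminus\{0\}$ has a least element, equivalently $\tbigwedge(S\setminus\{0\})>0$. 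The point is that $m:=\tbigwedge(S\setminus\{0\})$ lies in $S$ by meet-closure; if $m>0$ it is the minimum nonzero element of $S$ and every family with meet $0$ must contain $0$, while if $m=0$ the family $S\setminus\{0\}$ itself witnesses that $0$ is not completely prime.

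Sufficiency is then immediate: if $L$ is IDM, then $L$ is a dense IDM sublocale of itself and contains every sublocale, so it is the largest dense IDM sublocale. For necessity I would argue by contraposition. Assume $L$ is not IDM; applying the dictionary to $S=L$ this says $\tbigwedge(L\setminus\{0\})=0$, i.e.\ $L$ has no least nonzero element. Given \emph{any} dense IDM sublocale $S$, with least nonzero element $m>0$, the equality $\tbigwedge(L\setminus\{0\})=0<m$ forces some $x\in L$ with $0<x<m$; I then set $S'=S\cup\{x\}$. A short verification shows $S'$ is closed under arbitrary meets (any infimum of a subfamily of $S'$ equals $x$ meeted with an element of $S$, which is $0$, $x$, or an element $\ge m>x$, hence always lands in $S'$), so $S'$ is a dense sublocale strictly containing $S$, and its least nonzero element is $x>0$, whence $S'$ is IDM. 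This contradicts the maximality of $S$, so no largest dense IDM sublocale exists.

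The technical heart, and the only place where care is required, is the first paragraph: establishing that sublocales of a chain are precisely the meet-closed subsets (via the explicit form of $\to$) and that the inherited frame structure on such a sublocale is again linear, so that Remark~\ref{remarks3.3}\,(2) is applicable. Once this description is in hand, both implications reduce to the trivial arithmetic of the least nonzero element, and the only residual check is that $S'=S\cup\{x\}$ is closed under \emph{arbitrary} (not merely binary) meets.
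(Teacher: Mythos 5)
Your proof is correct and takes essentially the same route as the paper: both reduce sublocales of a chain to meet-closed subsets, use Remarks~\ref{remarks3.3}\,(2) to translate IDM into complete primeness of $0$, and contradict maximality of a putative largest dense IDM sublocale by adjoining a single element. The only cosmetic difference is that you repackage complete primeness as the existence of a least nonzero element $m$ and therefore adjoin a carefully chosen $x$ with $0<x<m$, whereas the paper adjoins an arbitrary $a\in L-S$ and verifies complete primeness of $0$ directly from binary primeness in a chain.
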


\begin{proof}
The ``if'' part is trivial. Conversely,  in a linear locale $a\to b= 1$ or $b$ for each $a,b\in L$, and hence a sublocale is just a subset closed under meets. Moreover, since a sublocale set is in particular a subposet, it is also a chain, and hence the characterization in Remarks~\ref{remarks3.3}\,(2) still applies. Denote by $S$ the largest dense IDM sublocale. By contradiction, if $S\ne L$, pick $a\in L-S$ (where $-$ stands for set-theoretic difference). Then $S\cup\{a\}$ is obviously closed under meets and hence a (dense) sublocale. Furthermore, $0$ is completely prime in $S\cup\{a\}$, for if $a\wedge \tbigwedge a_i=0$ for some $\{a_i\}_{i\in I}\subseteq S$, since $a\ne 0$ and $0$ is always prime in a chain, it follows that $\tbigwedge a_i=0$ and so $a_i=0$ for some $i\in I$. This contradicts the maximality of $S$.
\end{proof}

The previous lemma yields examples of locales that do not possess  the largest dense IDM sublocale (for instance,  $L=[0,1]$).

Proposition~\ref{ldense} immediately yields a sufficient condition which ensures the existence of the largest dense IDM sublocale:

\begin{corollary}
For any \donotbreakdash{$\bot$}scattered locale there is the largest dense IDM sublocale.
\end{corollary}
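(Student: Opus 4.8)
The plan is to combine Proposition~\ref{ldense} with Fact~\ref{fact3.10}, exploiting the fact that $\bot$-scatteredness is inherited by dense sublocales. The key observation is that Fact~\ref{fact3.10} characterizes IDM as the conjunction of $\bot$-scatteredness and IED, so that for a dense sublocale of a $\bot$-scattered locale, being IDM collapses to being IED.

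First I would invoke Proposition~\ref{ldense} to obtain the largest dense IED sublocale $S$ of the given $\bot$-scattered locale $L$. Next, I would argue that $S$ is in fact IDM: since $S$ is a dense sublocale of the $\bot$-scattered locale $L$, Lemma~\ref{scin}\,\ref{scin2} guarantees that $S$ is itself $\bot$-scattered; combining this with the fact that $S$ is IED, Fact~\ref{fact3.10} yields that $S$ is IDM. Thus the largest dense IED sublocale is automatically a dense IDM sublocale.

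It then remains to check that $S$ is the \emph{largest} dense IDM sublocale, not merely a dense one. This follows because every IDM locale is IED (Remark~\ref{remarks3.9}\,(3)), so any dense IDM sublocale $T$ of $L$ is in particular a dense IED sublocale, whence $T\subseteq S$ by the maximality of $S$ among dense IED sublocales. Since $S$ is itself a dense IDM sublocale containing all such $T$, it is the largest one.

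I do not anticipate a serious obstacle here, as the corollary is essentially a bookkeeping argument assembling three previously established results; the only subtlety is ensuring that the two maximality notions (largest dense IED versus largest dense IDM sublocale) coincide, which is precisely where the implication IDM $\implies$ IED is used to funnel all candidate IDM sublocales inside the already-constructed maximal IED one.

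\begin{proof}
By Proposition~\ref{ldense}, let $S$ be the largest dense IED sublocale of the $\bot$-scattered locale $L$. Since $S$ is a dense sublocale of a $\bot$-scattered locale, Lemma~\ref{scin}\,\ref{scin2} ensures that $S$ is $\bot$-scattered; being also IED, Fact~\ref{fact3.10} implies that $S$ is IDM. Finally, if $T$ is any dense IDM sublocale of $L$, then $T$ is IED (Remark~\ref{remarks3.9}\,(3)), and hence $T\subseteq S$ by maximality of $S$. Therefore $S$ is the largest dense IDM sublocale of $L$.
\end{proof}
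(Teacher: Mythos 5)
Your proof is correct and follows exactly the paper's own argument: take the largest dense IED sublocale from Proposition~\ref{ldense}, note it is \donotbreakdash{$\bot$}scattered by dense-heredity (Lemma~\ref{scin}), conclude it is IDM via Fact~\ref{fact3.10}, and deduce maximality from the implication IDM $\implies$ IED. Your write-up merely makes explicit the final maximality step, which the paper leaves implicit.
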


\begin{proof}
The largest dense IED sublocale, whose existence is ensured by Proposition~\ref{ldense}, is also \donotbreakdash{$\bot$}scattered,  due to the fact that \donotbreakdash{$\bot$}scatteredness is hereditary with respect to dense sublocales.  Hence there is the largest dense IDM sublocale.
\end{proof}

The condition of having open Booleanization seems to be far from being necessary: as a counterexample, any non $\bot$-scattered fit locale $L$  works (of course, there are plenty of examples of those). Indeed, fitness is hereditary and so every sublocale of $L$ is fit. So IDM sublocales are just the same as Boolean sublocales by Fact~\ref{sbb}, thus the largest dense IDM sublocale exists (and coincides with the Booleanization of $L$).

\begin{remark}
Originally the DeMorganization construction was proved more generally for toposes, cf. \cite{DM,FIELDS}. Therefore, it seems natural to consider Proposition~\ref{ldense} in that context. We are not going to do so in this paper, except to say that one would need to define the IED property for toposes appropriately. It is not sensible to define an IED topos to be one in which double negation $\neg\neg\colon\Omega\longrightarrow \Omega$ has an internal right adjoint, since an easy modification of the proof of Theorem~6 in \cite{CCD} shows that in that case the topos is necessarily Boolean.
\end{remark}
\section*{Acknowledgements} The author is grateful to his PhD supervisors Javier Guti\'errez Garc\'ia and Jorge Picado for their help and for the many improvements they made to this paper.


\begin{thebibliography}{9}

\bibitem{BP}
B.~Banaschewski and A.~Pultr, \emph{Booleanization}, Cahiers Topologie G\'{e}om. Diff\'{e}rentielle Cat\'{e}g. \textbf{37} (1996), 41--60.

\bibitem{FS}
M.~Barr, \emph{Fuzzy set theory and topos theory}, Canad. Math. Bull. \textbf{29} (1986), 501--508.

\bibitem{HB}
F.~Borceux, \emph{Handbook of Categorical Algebra: Volume 3, Categories of sheaves}, Encyclopedia of Mathematics and its Applications, 52, Cambridge University Press, Cambridge, 1994.

\bibitem{DM}
O.~Caramello, \emph{De Morgan classifying toposes}. Adv. Math. \textbf{222} (2009), 2117--2144.

\bibitem{FIELDS}
O.~Caramello and P.~T.~Johnstone, \emph{De Morgan’s law and the theory of fields}. Adv. Math. \textbf{222} (2009), 2145--2152.

\bibitem{IRRED}
 T.~Dube, \emph{Irreducibility in pointfree topology}, Quaest. Math. \textbf{27} (2004), 231--241.

\bibitem{DS}T.~Dube and M.R.~Sarpoushi, \emph{On densely normal locales}, Topol. Appl., to appear (doi: 10.1016/j.topol.2019.107015).

\bibitem{SCAT}
L.~Esakia, M.~Jibladze and D.~Pataraia, \emph{Scattered toposes}, Ann. Pure Appl. Logic \textbf{103} (2000), 97--107.

\bibitem{GLIVENKO}
V.~Glivenko, \emph{Sur quelque points de la logique de M. Brouwer}, Bull. Acad. R. Belg. Cl. Sci.  \textbf{15} (1929), 183-188.

\bibitem{PG} J.~Guti\'errez Garc\'ia, T.~Kubiak and J.~Picado, \emph{Lower and upper regularizations of frame semicontinuous real functions,} Algebra Universalis \textbf{60} (2009), 169--184.

 \bibitem{HERPROP} J.~Guti\'errez Garc\'ia, T.~Kubiak and J.~Picado, \emph{On hereditary properties of extremally disconnected frames and normal frames}, Topology Appl., to appear (doi: 10.1016/j.topol.2019.106978).

 \bibitem{PARAL} J.~Guti\'{e}rrez Garc\'{\i}a and J.~Picado, \emph{On the parallel between normality and extremal disconnectedness}, J. Pure Appl. Algebra \textbf{218} (2014), 784--803.


\bibitem{WS}
 H.~Herrlich and A.~Pultr, \emph{Nearness, subfitness and sequential regularity,} Appl. Categ. Structures \textbf{8} (2000), 67--80.

\bibitem{ATOM}
J.R.~Isbell,~\emph{Atomless Parts of Spaces}, Math. Scand. \textbf{31} (1972), 5--32.

\bibitem{Descp}
J.R.~Isbell, \emph{First steps in descriptive theory of locales}, Trans. Amer. Math. Soc. \textbf{327} (1991), 353--371.

\bibitem{DOV}
P.~T.~Johnstone, \emph{Topos theory}, London Mathematical Society Monographs, vol. 10, Academic Press, London-New York, 1977.

\bibitem{PJ} P.~T.~Johnstone, \emph{Conditions related to De Morgan's law}, in: Applications of Sheaves, Lecture Notes in Math. \textbf{753}, Springer, Berlin (1980), pp. 479--491.

\bibitem{STONE}
P.~T.~Johnstone, \emph{Stone Spaces}, Cambridge Studies in Advanced Mathematics, vol. 3, Cambridge University Press, Cambridge, 1982.

\bibitem{SKET}
P.~T.~Johnstone, \emph{Sketches of an Elephant:  a Topos Theory Compendium},  vols. 1 and 2,  Oxford Logic Guides, vol. 44, The Clarendon Press, Oxford University Press, 2002.

\bibitem{SHEAVES}
S.~Mac Lane and I.~Moerdijk, \emph{Sheaves in Geometric and Logic. A First Introduction to Topos theory}, corrected reprint of the 1992 edition, Universitext, Springer-Verlag, New York, 1994.

\bibitem{PP12}
J.~Picado and A.~Pultr, \emph{Frames and locales: Topology without points}, Frontiers in Mathematics, vol. 28, Springer, Basel, 2012.

\bibitem{SUB} J.~Picado and A.~Pultr, \emph{New aspects of subfitness in frames and spaces}, Appl. Categ. Structures \textbf{24} (2016), 703--714.

 \bibitem{PT}
J.~Picado,  A.~Pultr and A.~Tozzi, \emph{Joins of closed sublocales}, Houston J. Math. \textbf{45} (2019), 21--38.

\bibitem{HORD}
T.~Plewe, \emph{Higher order dissolutions and Boolean coreflections of locales}, J. Pure Appl. Algebra \textbf{154} (2000), 273--293.

\bibitem{SLAT}
T.~Plewe, \emph{Sublocale lattices}, J. Pure Appl. Algebra, \textbf{168} (2002), 309–326.

\bibitem{CCD} R.~Rosebrugh and R.~J.~Wood, \emph{Constructive complete distributivity II}, Math. Proc. Cambridge Philos. Soc. \textbf{110} (1991), 245--249.

\bibitem{RS}J.~Rosick\'y and B.~\v{S}marda, \emph{$T_1$-locales}, Math. Proc. Cambridge Philos. Soc. \textbf{98} (1985), 81--86.
\end{thebibliography}
\end{document}